\newcounter{lemma}[section]
\newcounter{corollary}[section]
\newcounter{remark}[section]
\newcounter{theorem}[section]
\newcounter{proposition}[section]
\newcounter{example}
\numberwithin{equation}{section}
\begin{document}

\markboth{R.\ R.\ Salimov, E.\ A.\ Sevost'yanov, V.\,A.
Targonskii}{\centerline{ ON MODULUS INEQUALITY ...}}

\def\cc{\setcounter{equation}{0}
\setcounter{figure}{0}\setcounter{table}{0}}

\overfullrule=0pt


\author{R.\ R.\ Salimov, E.\ A.\ Sevost'yanov, V.\,A. Targonskii}

\title{
{\bf ON MODULUS INEQUALITY OF THE ORDER $P$ FOR THE INNER
DILATATION}}

\date{\today}
\maketitle

\begin{abstract}
The article is devoted to mappings with bounded and finite
distortion of plane domains. Our investigations are devoted to the
connection between mappings of the Sobolev class and upper bounds
for the distortion of the modulus of families of paths. For this
class, we have proved the Poletsky-type inequality with respect to
the so-called inner dilatation of the order~$p.$ Along the way, we
also obtained lower bounds for the modulus distortion under
mappings. We separately considered the situations of homeomorphisms
and mappings with branch points.
\end{abstract}

\bigskip
{\bf 2010 Mathematics Subject Classification: Primary 30С65, 31A15,
30C62}

\section{Introduction}

This article is devoted to establishing estimates of the distortion
of the modulus of families of paths under mappings. The main object
of study is the Sobolev classes on the plane. Our manuscript refers
to the case when the inequality under study involves an inner
dilatation of an arbitrary order, in addition, the mapping can admit
branch points. Similar and close results of the authors may be found
in~\cite{LSS}--\cite{Sev$_5$}.

\medskip
Let us turn to the definitions and the formulation of the main
result. In what follows, $D$ is a domain in ${\Bbb C}$ and $dm(z)$
denotes the element of Lebesgue measure in ${\Bbb C}.$ As a rule, a
mapping $f:D\rightarrow{\Bbb C}$ is assumed to be {\it
sense-preserving,} moreover, we assume that $f$ has partial
derivatives almost everywhere. Put $z=x+iy,$ $i^2=-1,$
$f_{\overline{z}} = \left(f_x + if_y\right)/2$ and $f_z = \left(f_x
- if_y\right)/2.$ Note that the Jacobian of $f$ at $z\in D$ is
calculated by the formula
$$J(z,
f)=|f_z|^2-|f_{\overline{z}}|^2\,.$$
Given $p\geqslant 1,$ the {\it inner dilatation of the order $p$} is
defined as
\begin{equation}\label{eq1}
K_{I, p}(z,
f)=\frac{|f_z|^2-|f_{\overline{z}}|^2}{{(|f_z|-|f_{\overline{z}}|)}^{\,p}}
\end{equation}
for $J(z, f)\ne 0;$ an addition, we set $K_{I, p}(z, f)=1$ for
$f^{\,\prime}(z)=0$ and $K_{I, p}(z, f)=\infty$ otherwise.

\medskip
Recall the definition of Sobolev classes, which is the key to this
manuscript. In what follows, $C^k_0(U)$ denotes the space of
functions $u:U \rightarrow {\Bbb R} $ with a compact support in $U,$
having $k$ partial derivatives with respect to any variable that are
continuous in $U.$ We also recall the concept of a generalized
Sobolev derivative (see, for example,  \cite[Section~2, Ch.~I]{Re}).
Let $ U $ be an open set, $U\subset{\Bbb C},$ $u:U\rightarrow {\Bbb
R}$ is some function, $u \in L_{\rm loc}^{\,1}(U).$ Suppose there is
a function $v\in L_{\rm loc}^{\,1}(U)$ such that
$$\int\limits_U \frac{\partial \varphi}{\partial x_i}(z)u(z)\,dm(z)=
-\int\limits_U \varphi(z)v(z)\,dm(z)$$
for any function $\varphi\in C^1_{\,0}(U),$ $i=1,2.$ Then we say
that the function $v$ is a generalized derivative of the first order
of the function $u$ with respect to $x_i$ and denoted by the symbol:
$\frac{\partial u}{\partial x_i}(z):= v.$ Here $z=x_1+ix_2,$
$i^2=-1.$

A function $u\in W_{\rm loc}^{1,1}(U)$ if $u$ has generalized
derivatives of the first order with respect to each of the variables
in $U,$ which are locally integrable in $U.$

\medskip
A mapping $f:D\rightarrow {\Bbb C},$ $f(z)=u(z)+iv(z),$ belongs to
the Sobolev class $W_{\rm loc}^{1,1},$ write $f \in W^{1,1}_{\rm
loc}(D),$ if $u$ and $v$ have generalized partial derivatives of the
first order, which are locally integrable in $D$ in the first
degree. We write $f\in W^{1, k}_{\rm loc}(D),$ $k\in {\Bbb N},$ if
all coordinate functions $f=(f_1,\ldots, f_n) $ have generalized
partial derivatives of the first order, which locally integrable in
$D$ to the degree $k.$

\medskip
Recall that a mapping $f$ between domains $D$ and $D^{\,\prime}$ in
${\Bbb C}$ is of {\it finite distortion} if $f\in W^{1,1}_{\rm loc}$
and, besides that, there is a function $K(z)<\infty$ a.e. such that
$$
{\Vert f^{\,\prime}(z)\Vert}^2\leqslant K(z)\cdot J(z, f)
$$
for a.e. $z\in D,$ where $\Vert
f^{\,\prime}(z)\Vert=|f_z|+|f_{\overline{z}}|.$ For mappings of
finite distortion, we refer to \cite{IM} and to the reference
therein.

\medskip
Let $Q:{\Bbb C}\rightarrow {\Bbb R}$ be a Lebesgue measurable
function satisfying the condition $Q(z)\equiv 0$ for $z\in{\Bbb
C}\setminus D.$ Let $z_0\in\overline{D},$ $z_0\ne\infty,$
\begin{equation}\label{eq1ED}
B(z_0, r)=\{z\in {\Bbb C}: |z-z_0|<r\}\,,\qquad S(z_0,r) = \{
z\,\in\,{\Bbb C} : |z-z_0|=r\}\,,\end{equation}
\begin{equation}\label{eq1**A} A=A(z_0, r_1, r_2)=\{ z\,\in\,{\Bbb C} :
r_1<|z-z_0|<r_2\}\,.
\end{equation}
In what follows, $M_{\alpha}$ denotes the {\it $\alpha$-modulus} of
family $\Gamma$ of paths $\gamma: I\rightarrow {\Bbb C}$ in ${\Bbb
C},$ where $I$ is a closed, open or half-open interval in ${\Bbb R}$
(see \cite{Va}). Given $\alpha\geqslant 1,$ a mapping
$f:D\rightarrow \overline{\Bbb C}$ is called a {\it ring $Q$-mapping
at a point $z_0\in \overline{D}\setminus \{\infty\}$ with respect to
$\alpha$-modulus}, if the condition
\begin{equation} \label{eq2*!A}
M_{\alpha}(f(\Gamma(C_1, C_2, D)))\leqslant \int\limits_{A\cap D}
Q(z)\cdot \eta^{\,\alpha} (|z-z_0|)\, dm(z)
\end{equation}
holds for all $0<r_1<r_2<d_0:={\rm dist}\, (z_0,
\partial D),$ for any continua $C_1\subset
\overline{B(z_0, r_1)},$ $C_2\subset D\setminus B(z_0, r_2)$ and all
Lebesgue measurable functions $\eta:(r_1, r_2)\rightarrow [0,
\infty]$ such that
\begin{equation}\label{eq8BC}
\int\limits_{r_1}^{r_2}\eta(r)\,dr\geqslant 1\,.
\end{equation}
A mapping $f$ is called a {\it ring $Q$-mapping in $D$ with respect
to $\alpha$-modulus,} if condition~(\ref{eq2*!A}) is satisfied at
every point $z_0\in D,$ and a {\it ring $Q$-mapping in
$\overline{D}$ with respect to $\alpha$-modulus,} if the
condition~(\ref{eq2*!A}) holds at every point $z_0\in\overline{D}.$
For the properties of such mappings see~\cite{RSY} and~\cite{MRSY}.

\medskip
Recall that a pair $E=(A,\,C),$ where $A$ is an open set in ${\Bbb
R}^n,$ and $C$ is a compact subset of $A,$ is called {\it condenser}
in ${\Bbb R}^n$.  A quantity
\begin{equation}\label{eq4G}{\rm cap}_p\,E\quad=\quad{\rm
cap}_p\,(A,\,C)=\inf\limits_{u\,\in\,W_0\left(E\right)
}\quad\int\limits_A\,|\nabla u|^p\,\,dm(x)\,,
\end{equation}
where $dm(x)$ denotes the element of the Lebesgue measure in ${\Bbb
R}^n,$ $W_0(E)=W_0\left(A,\,C\right)$ is a family of all nonnegative
absolutely continuous on lines (ACL) functions $u:A\rightarrow {\Bbb
R}$ with compact support in $A$ and such that $u(x)\geqslant 1$ on
$C,$ is called {\it $p$-capacity} of the condenser $E$.

\medskip
The main results of the article are the following.

\begin{theorem}\label{th1}
Let $f:D\rightarrow {\Bbb C}$ be a homeomorphism with a finite
distortion and let $1<\alpha\leqslant 2.$ Assume that $K_{I,
\alpha}(z, f)\in L_{\rm loc}^1(D).$ Then $f$ satisfies the
relation~(\ref{eq2*!A}) at any point $z_0\in \overline{D}\setminus
\{\infty\}$ with $Q(z)=K_{I, \alpha}(z, f).$
\end{theorem}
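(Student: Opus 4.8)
The plan is to estimate the image modulus directly, by exhibiting an admissible metric for the family $f(\Gamma(C_1,C_2,D))$ obtained by transporting the natural radial density from the domain through $f$, and then to reduce the whole of~(\ref{eq2*!A}) to a single change–of–variables computation that manufactures the quotient in~(\ref{eq1}). Fix $z_0$ with $d_0:=\mathrm{dist}\,(z_0,\partial D)>0$ (for $z_0\in\partial D$ the admissible range of radii is empty and there is nothing to prove), fix $0<r_1<r_2<d_0$, the continua $C_1,C_2$, and a function $\eta$ as in~(\ref{eq8BC}). Writing $\Gamma:=\Gamma(C_1,C_2,D)$, $D'=f(D)$ and $\Gamma^{*}=f(\Gamma)$, I note that since $f$ is a homeomorphism, $\Gamma^{*}$ is exactly the family of paths joining the disjoint continua $f(C_1)$ and $f(C_2)$ in $D'$. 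One could equivalently phrase the resulting bound as a capacity estimate via the capacity–modulus identity behind~(\ref{eq4G}), but the metric route avoids that detour.

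First I would record the regularity of $f$ that legitimizes the change of variables. Since $f$ is a planar homeomorphism with partial derivatives a.e., the Gehring--Lehto theorem gives differentiability a.e.; finite distortion together with $K_{I,\alpha}(\cdot,f)\in L^1_{\mathrm{loc}}(D)$ yields, by the standard theory of such mappings (see \cite{IM},\cite{MRSY}), the Lusin $(N)$-property and absolute continuity on almost all paths, so that the area formula $\int_{f(S)}g\,dm(w)=\int_{S}g(f(z))\,J(z,f)\,dm(z)$ holds. In particular $K_{I,\alpha}(\cdot,f)\in L^1_{\mathrm{loc}}$ forces $\{z:f^{\,\prime}(z)\neq0,\ J(z,f)=0\}$ to be null, and then the critical set $\{J(z,f)=0\}$ has image of measure zero. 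The restriction $\alpha\le2$ enters here, through the Sobolev regularity of $f^{-1}$ needed to control the exceptional paths below.

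Next I would build the metric. Put $\rho(z)=\eta(|z-z_0|)$ on the ring $A=A(z_0,r_1,r_2)$ and $\rho\equiv0$ elsewhere; by~(\ref{eq8BC}) and the standard lower bound for the integral of a radial density along a path crossing a ring, $\int_{\gamma}\rho\,ds\ge\int_{r_1}^{r_2}\eta(r)\,dr\ge1$ for every $\gamma\in\Gamma$, so $\rho$ is admissible for $\Gamma$. Transport it to the image by
\begin{equation*}
\rho_{*}(w)=\frac{\eta(|z-z_0|)}{|f_z|-|f_{\overline{z}}|},\quad z=f^{-1}(w),\ w\in f(A),\qquad \rho_{*}\equiv0\ \text{otherwise}.
\end{equation*}
Since $|f^{\,\prime}(z)\tau|\ge|f_z|-|f_{\overline{z}}|$ for every unit tangent $\tau$, for each $\gamma\in\Gamma$ along which $f$ is absolutely continuous one gets $\int_{f\gamma}\rho_{*}\,ds\ge\int_{\gamma}\rho\,ds\ge1$. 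The paths on which $f$ fails to be absolutely continuous form a family $\Gamma_0$ whose image has $\alpha$-modulus zero, so deleting $f(\Gamma_0)$ leaves $M_{\alpha}(\Gamma^{*})$ unchanged and $\rho_{*}$ is admissible for $\Gamma^{*}$.

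Finally, admissibility gives $M_{\alpha}(\Gamma^{*})\le\int_{D'}\rho_{*}^{\,\alpha}\,dm(w)$, and the area formula (restricted to $\{J>0\}$, where $\rho_{*}$ and $K_{I,\alpha}$ are finite) turns this into
\begin{equation*}
M_{\alpha}(\Gamma^{*})\le\int_{A\cap D}\frac{|f_z|^2-|f_{\overline{z}}|^2}{(|f_z|-|f_{\overline{z}}|)^{\,\alpha}}\,\eta^{\,\alpha}(|z-z_0|)\,dm(z)=\int_{A\cap D}K_{I,\alpha}(z,f)\,\eta^{\,\alpha}(|z-z_0|)\,dm(z),
\end{equation*}
which is precisely~(\ref{eq2*!A}) with $Q=K_{I,\alpha}(\cdot,f)$. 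I expect the one genuine obstacle to be the measure-theoretic bookkeeping rather than the computation: verifying that the exceptional family $\Gamma_0$ has image of $\alpha$-modulus zero (which is where the regularity of $f^{-1}$ and the hypothesis $\alpha\le2$ are really used), and checking that $\rho_{*}$ is Borel and that the area formula applies after deleting the critical and non-$(N)$ null sets.
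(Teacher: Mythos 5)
Your proposal takes the classical direct route to a Poletsky-type inequality: transport the radial metric $\rho(z)=\eta(|z-z_0|)$ to $\rho_*(w)=\eta(|z-z_0|)/\left(|f_z|-|f_{\overline{z}}|\right)$ at $z=f^{-1}(w)$, verify admissibility modulo an exceptional family, and change variables. The gap is exactly at the two points you flag and then defer as ``bookkeeping'': under the hypotheses actually available ($f\in W^{1,1}_{\rm loc}$ homeomorphism of finite distortion, $K_{I,\alpha}(\cdot,f)\in L^1_{\rm loc}$, $1<\alpha\leqslant 2$), neither the Lusin $(N)$-property nor the claim that the paths on which $f$ fails to be absolutely continuous form a family whose image has $\alpha$-modulus zero is ``standard theory,'' and neither follows from \cite{IM} or \cite{MRSY} at this level of generality. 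These are the core difficulty, not routine verification. Concretely: your final step needs $\int_{D'}\rho_*^{\alpha}\,dm(w)\leqslant\int_{A}K_{I,\alpha}(z,f)\,\eta^{\alpha}(|z-z_0|)\,dm(z)$, and if $B$ denotes the set where $f$ is differentiable with $J(z,f)>0$, the change of variables is exact only on $B$; the set $f(A\setminus B)$ is the image of a Lebesgue-null set and may have positive measure precisely when $(N)$ fails (Ponomarev-type examples show $(N)$ can fail for $W^{1,1}_{\rm loc}$, even $W^{1,p}_{\rm loc}$ with $p<n$, homeomorphisms), and there $\rho_*$ does not vanish, so the image-side integral is not controlled. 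If instead you redefine $\rho_*=0$ on $f(A\setminus B)$, admissibility requires that $\alpha$-a.e.\ image path meets $f(A\setminus B)$ in a set of linear measure zero, which is again an $(N)$-type assertion. Likewise, absolute continuity of $f^{-1}$ along $\alpha$-a.e.\ image path would follow via Fuglede from something like $f^{-1}\in W^{1,\alpha}_{\rm loc}$, which is neither assumed nor derivable here. So the proof as written does not close.

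The paper's argument is built specifically to avoid these obstacles, and it is genuinely different from yours: it never pushes a metric forward along paths. Theorem~\ref{thOS4.2} establishes the \emph{lower} estimate~(\ref{eq1A}) for moduli of images of families of circles $S(z_0,r)$, which requires only absolute continuity of $f$ on almost every circle (a Fubini consequence of $W^{1,1}_{\rm loc}$) plus the ``easy'' direction of the area formula, valid for a.e.-differentiable maps through Federer's bi-Lipschitz decomposition and needing no $(N)$-property; then the Ziemer--Hesse--Shlyk duality~(\ref{eq3}), (\ref{eq4}), via Lemma~\ref{lem4}, Lemma~\ref{l4.4} and Proposition~\ref{pr1}, converts this lower bound on separating families into the upper bound~(\ref{eq2*!A}) on connecting families. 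Note also that you dismiss $z_0\in\partial D$ as vacuous by reading $d_0={\rm dist}\,(z_0,\partial D)=0$; while that matches the letter of the definition, the paper treats boundary points substantively (Proposition~\ref{pr3}, with $d_0=\sup_{z\in D}|z-z_0|$ as in Theorem~\ref{th2}), so your proof also does not deliver what the theorem is intended to assert on $\partial D$.
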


\medskip
For a mapping $f\colon D\rightarrow{\Bbb R}^n$, a set $E\subset D$,
and $y\,\in\,{\Bbb R}^n$, we define the {\it multiplicity function}
$N(y, f, E)$ to be the number of preimages of $y$ in $E$, i.e.,
\begin{equation}\label{eq1.7A}
N(y, f, E)={{\rm card}}\left\{x\in E\,:\, f(x)=y\right\},\quad N(f,
E)=\sup_{y\in{\Bbb R}^n}\,N(y, f, E).
\end{equation}

\medskip
Let $X$ and $Y$ be metric spaces. A mapping $f:X\rightarrow Y$ is
{\it discrete} if $f^{\,-1}(y)$ is discrete for all $y\in Y$ and $f$
is {\it open} if $f$ maps open sets onto open sets. A mapping
$f:X\rightarrow Y$ is called {\it closed} if $f(A)$ is closed in
$f(X)$ whenever $A$ is closed in $X.$ For mappings with a branching,
we have the following.

\begin{theorem}\label{th1A}
Let $f:D\rightarrow {\Bbb C}$ be an open and discrete bounded
mapping such that $N(f, D)<\infty.$ Let $1< \alpha\leqslant 2$ and
let $z_0\in D.$ If $K_{I, \alpha}(z, f)\in L_{\rm loc}^1(D),$ then
$f$ satisfies the relation
$${\rm cap}_{\alpha}\, f(\mathcal{E})\leqslant\int\limits_{A} N^{\frac{1}{p-1}}(f, D)K_{I, \alpha}(z, f)\cdot
\eta^{\alpha}(|z-z_0|)\, dm(z)
$$
holds for $p=\frac{\alpha}{\alpha-1}$ and $\mathcal{E}=(B(z_0, r_2),
\overline{B(z_0, r_1)}),$ $A=A(z_0, r_1, r_2),$
$0<r_1<r_2<\varepsilon_0:={\rm dist}\,(z_0,
\partial D),$ and $\eta \colon (r_1,r_2)\rightarrow [0,\infty ]$ may
be chosen as arbitrary nonnegative Lebesgue measurable function
satisfying the relation~(\ref{eq8BC}).
\end{theorem}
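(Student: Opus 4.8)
The plan is to reduce the capacity of the image condenser to a modulus of a path family and then to estimate that modulus by a Poletsky-type change of variables. First I would check that the image is again a condenser: since $f$ is continuous and open and $\overline{B(z_0,r_1)}\subset B(z_0,r_2)$, with $\overline{B(z_0,r_2)}\subset D$ because $r_2<\varepsilon_0$, the set $f(B(z_0,r_2))$ is open, $f(\overline{B(z_0,r_1)})$ is compact, and the latter lies in the former; thus $f(\mathcal{E})=(f(B(z_0,r_2)),f(\overline{B(z_0,r_1)}))$ is a condenser. Denote by $\Sigma$ the family of all paths joining the plates $f(\overline{B(z_0,r_1)})$ and $\partial f(B(z_0,r_2))$ inside $f(B(z_0,r_2))$. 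By the classical identity between the $\alpha$-capacity of a condenser and the $\alpha$-modulus of the family connecting its plates (Ziemer, Hesse, Shlyk), one has $\mathrm{cap}_{\alpha}\, f(\mathcal{E})=M_{\alpha}(\Sigma)$, so it suffices to bound $M_{\alpha}(\Sigma)$ from above.

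Next I would construct an admissible metric for $\Sigma$ by pushing forward the radial metric of the annulus. Put $\rho(x)=\eta(|x-z_0|)$ for $x\in A=A(z_0,r_1,r_2)$ and $\rho(x)=0$ otherwise; by~(\ref{eq8BC}) this $\rho$ is admissible for the family $\Gamma(C_1,C_2,D)$ with $C_1=\overline{B(z_0,r_1)}$ and $C_2=D\setminus B(z_0,r_2)$, since any such path meets every sphere $S(z_0,r)$, $r\in(r_1,r_2)$, so its $\rho$-length is at least $\int_{r_1}^{r_2}\eta(r)\,dr\geqslant1$. Writing $l(f^{\,\prime}(x))=|f_z|-|f_{\overline{z}}|$ for the minimal stretching, set
$$\widetilde{\rho}(y)=\sum_{x\in f^{-1}(y)}\frac{\rho(x)}{l(f^{\,\prime}(x))}\,,\qquad y\in f(D)\,,$$
and $\widetilde{\rho}=0$ elsewhere. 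Admissibility of $\widetilde{\rho}$ for $\Sigma$ follows from the theory of maximal liftings of open discrete mappings: every $\widetilde{\gamma}\in\Sigma$ lifts, starting from a preimage of its initial point lying in $\overline{B(z_0,r_1)}$, to a path $\gamma$ in $D$ that must traverse the annulus $A$; along this lift the stretching estimate $|(f\circ\gamma)^{\,\prime}|\geqslant l(f^{\,\prime}(\gamma))\,|\gamma^{\,\prime}|$ gives $\int_{\widetilde{\gamma}}\widetilde{\rho}\,ds\geqslant\int_{\gamma}\rho\,ds\geqslant1$, the summation over the remaining sheets only helping.

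The modulus estimate then proceeds through the area formula. Because $\mathrm{card}\,f^{-1}(y)\leqslant N:=N(f,D)$, the power-mean (Jensen) inequality yields
$$\widetilde{\rho}(y)^{\alpha}=\Big(\sum_{x\in f^{-1}(y)}\frac{\rho(x)}{l(f^{\,\prime}(x))}\Big)^{\alpha}\leqslant N^{\alpha-1}\sum_{x\in f^{-1}(y)}\frac{\rho(x)^{\alpha}}{l(f^{\,\prime}(x))^{\alpha}}\,,$$
which is exactly where the multiplicity factor enters; note $\alpha-1=\tfrac{1}{p-1}$ for $p=\tfrac{\alpha}{\alpha-1}$. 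Integrating and applying the change-of-variables formula (legitimate once one knows that a sense-preserving mapping of finite distortion is differentiable a.e. and enjoys Lusin's condition~(N)), I obtain
$$M_{\alpha}(\Sigma)\leqslant\int_{f(D)}\widetilde{\rho}(y)^{\alpha}\,dm(y)\leqslant N^{\alpha-1}\int_{D}\frac{\rho(x)^{\alpha}}{l(f^{\,\prime}(x))^{\alpha}}\,J(x,f)\,dm(x)\,.$$
On the set $\{J(\cdot,f)\neq0\}$ the integrand equals $N^{\alpha-1}K_{I,\alpha}(x,f)\rho(x)^{\alpha}$ by the definition~(\ref{eq1}), while on $\{J(\cdot,f)=0\}$ it vanishes and is therefore dominated by the same quantity; since $\rho$ is supported in $A$ and $N^{\alpha-1}=N^{1/(p-1)}$, this is precisely the asserted inequality.

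The hard part will be the rigorous justification of the two places where mapping theory is invoked. The first is the admissibility of $\widetilde{\rho}$ for $\Sigma$, which rests on the existence and the boundary behaviour of maximal lifts for open discrete (not necessarily closed) mappings and on the fact that such a lift, started in $\overline{B(z_0,r_1)}$, must cross the whole annulus before its image can reach $\partial f(B(z_0,r_2))$. The second is the validity of the area formula under the sole hypotheses $f\in W^{1,1}_{\mathrm{loc}}$ of finite distortion with $K_{I,\alpha}\in L^{1}_{\mathrm{loc}}(D)$, which forces one to verify a.e. differentiability and condition~(N) for this class in the plane. Establishing the capacity–modulus identity for the possibly irregular image condenser $f(\mathcal{E})$ is a further technical point to be dealt with carefully.
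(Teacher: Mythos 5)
Your argument is the classical direct Poletsky scheme: push the radial metric forward via $\widetilde{\rho}(y)=\sum_{x\in f^{-1}(y)}\rho(x)/l(f^{\,\prime}(x))$, prove admissibility through maximal lifts, and finish with the area formula plus Jensen. The formal computation (including the multiplicity exponent $N^{\alpha-1}=N^{1/(p-1)}$) is right, but the two steps you defer to ``mapping theory'' are not loose ends --- they are exactly where this route collapses at the stated regularity, so the proposal has a genuine gap. First, your change-of-variables step presumes that a sense-preserving $W^{1,1}_{\rm loc}$ mapping of finite distortion is a.e.\ differentiable \emph{and satisfies Lusin's condition (N)}. The second claim is false in general: condition (N) may fail for Sobolev mappings (even homeomorphisms) below the borderline integrability, and neither finite distortion nor $K_{I,\alpha}\in L^1_{\rm loc}$ with $1<\alpha\leqslant 2$ repairs this. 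Without (N) you must restrict the sum defining $\widetilde{\rho}$ to preimages lying in the differentiability set with $J\ne 0$; but then admissibility requires that almost every path of your family $\Sigma$ meets the image of the exceptional null set in a set of zero length, which is precisely what cannot be guaranteed. Second, the estimate $\int_{\widetilde{\gamma}}\widetilde{\rho}\,ds\geqslant\int_{\gamma}\rho\,ds$ along a maximal lift needs $f$ to be absolutely continuous on almost every lift together with the a.e.\ derivative bound along it; this is Poletsky's lemma, a deep theorem known for quasiregular mappings, and no version of it is available for merely open, discrete $W^{1,1}_{\rm loc}$ mappings of finite distortion with locally integrable $K_{I,\alpha}$. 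Citing ``the theory of maximal liftings'' gives existence of lifts, not their absolute continuity.

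The paper's proof is designed to avoid both obstacles by arguing dually. Theorem~\ref{thOS4.2} proves a \emph{lower} bound for the $p$-modulus of the images of the circles $S(z_0,r)$ (the lower $Q$-mapping inequality (\ref{eq1A})); this needs only absolute continuity of $f$ on almost every circle --- available from the ACL property via Fubini, and absolute continuity on a curve does imply condition (N) \emph{on that curve} --- together with Federer's area formula on the bi-Lipschitz pieces of the differentiability set, with no lifting of paths anywhere. Lemma~\ref{l4.4} then converts this lower estimate for the separating family into the desired upper bound for ${\rm cap}_{\alpha}\,f(\mathcal{E})$ by duality: Ziemer's equality (\ref{eq3}), the Hesse--Shlyk identity (\ref{eq4}), and Proposition~\ref{pr1}. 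Theorem~\ref{th1A} is literally the concatenation of these two results. To rescue your approach you would have to establish a Poletsky lemma and condition (N) for this class, i.e.\ solve a harder problem than the theorem itself.
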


\begin{theorem}\label{th2}
Let $f:D\rightarrow {\Bbb C}$ be an open, discrete and closed
mapping of a finite distortion and let $1<\alpha\leqslant 2.$ Assume
that $K_{I, \alpha}(z, f)\in L_{\rm loc}^1(D).$ Then, for any
$z_0\in \partial D,$ any $\varepsilon_0<d_0:=\sup\limits_{z\in
D}|z-z_0|$ and any compactum $C_2\subset D\setminus B(z_0,
\varepsilon_0)$ there is $\varepsilon_1,$
$0<\varepsilon_1<\varepsilon_0,$ such that the relation
\begin{equation}\label{eq3A}M_{\alpha}(f(\Gamma(C_1, C_2, D)))\leqslant \int\limits_{A(z_0,
\varepsilon, \varepsilon_1)}K_{I, \alpha}(z, f)
\eta^{\alpha}(|z-z_0|)\,dm(z)
\end{equation}
holds for any $\varepsilon\in (0, \varepsilon_1)$ and any
$C_1\subset \overline{B(z_0, \varepsilon)}\cap D,$ where $A(z_0,
\varepsilon, \varepsilon_1)$ is defined in~(\ref{eq1**A}), and
$\eta: (\varepsilon, \varepsilon_1)\rightarrow [0,\infty]$ is
arbitrary Lebesgue measurable function satisfying the
relation~(\ref{eq8BC}).
\end{theorem}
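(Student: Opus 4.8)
The plan is to reduce the assertion to the basic change-of-variables (Poletsky-type) modulus inequality for $f$, and then to insert the radial admissible function $\eta(|z-z_0|)$, using closedness only to locate the threshold $\varepsilon_1$ and to guarantee liftability of paths. Throughout I write $l(z):=|f_z|-|f_{\overline z}|$ for the minimal stretching, so that $J(z,f)=\|f^{\,\prime}(z)\|\cdot l(z)$ and, by~(\ref{eq1}), $K_{I,\alpha}(z,f)=J(z,f)/l(z)^{\alpha}$.

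First I would establish the core inequality
\[
M_{\alpha}(f(\Gamma))\leqslant\int\limits_{D}K_{I,\alpha}(z,f)\,\rho^{\alpha}(z)\,dm(z),
\]
valid for every path family $\Gamma$ in $D$ and every nonnegative Borel $\rho$ admissible for $\Gamma$. To this end, define $\widetilde{\rho}(y)=\sup\{\rho(z)/l(z):z\in f^{-1}(y)\}$ for $y\in f(D)$ and $\widetilde{\rho}(y)=0$ otherwise. Since $f$ has finite distortion and lies in $W^{1,1}_{\rm loc}$, it is known in the present setting to satisfy Luzin's $(N)$-property and the area formula, whence, bounding $\sup\leqslant\sum$ over the fibre,
\[
\int\limits_{f(D)}\widetilde{\rho}^{\,\alpha}(y)\,dm(y)\leqslant\int\limits_{f(D)}\sum_{z\in f^{-1}(y)}\frac{\rho^{\alpha}(z)}{l(z)^{\alpha}}\,dm(y)=\int\limits_{D}\frac{\rho^{\alpha}(z)}{l(z)^{\alpha}}\,J(z,f)\,dm(z)=\int\limits_{D}K_{I,\alpha}(z,f)\,\rho^{\alpha}(z)\,dm(z).
\]
It remains to verify that $\widetilde{\rho}$ is admissible for $f(\Gamma)$, and this is exactly where openness, discreteness and closedness enter: each $\gamma^{*}\in f(\Gamma)$ equals $f\circ\gamma$ for some $\gamma\in\Gamma$, and for all but a family of zero $\alpha$-modulus (controlled by Fuglede's theorem together with the total path-lifting theorem for open discrete closed mappings) one has $\int_{\gamma^{*}}\widetilde{\rho}\,ds\geqslant\int_{\gamma}\rho\,ds\geqslant1$. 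Thus $\widetilde\rho\in\mathrm{adm}(f(\Gamma))$ up to a null family and the core inequality follows; note that the $\sup\leqslant\sum$ device produces no multiplicity factor, in contrast with Theorem~\ref{th1A}.

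Next I would fix the boundary threshold. Because $f$ is open, discrete and closed, it is a proper mapping onto $f(D)$ (a standard property of closed mappings), so for the compactum $C_{2}$ the set $f^{-1}(f(C_{2}))$ is a compact subset of $D$; since $z_{0}\in\partial D$ it lies at a positive distance from $z_{0}$. Choosing $\varepsilon_{1}\in(0,\varepsilon_{0})$ with $\varepsilon_{1}<\mathrm{dist}\,(z_{0},f^{-1}(f(C_{2})))$ guarantees $f(\overline{B(z_{0},\varepsilon_{1})}\cap D)\cap f(C_{2})=\varnothing$, in particular $f(C_{1})\cap f(C_{2})=\varnothing$ for every admissible $C_{1}$; equivalently, the cluster set of $f$ at $z_{0}$ is separated from $f(C_{2})$. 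This step is what replaces the global hypothesis $N(f,D)<\infty$ of Theorem~\ref{th1A} by closedness, and it makes $f(\Gamma(C_{1},C_{2},D))$ a genuine separating family in $f(D)$ to which the core inequality applies.

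Finally I would insert the radial function. Fix $\varepsilon\in(0,\varepsilon_{1})$ and $C_{1}\subset\overline{B(z_{0},\varepsilon)}\cap D$, and set $\rho(z)=\eta(|z-z_{0}|)$ on $A(z_{0},\varepsilon,\varepsilon_{1})\cap D$ and $\rho(z)=0$ elsewhere. Every $\gamma\in\Gamma(C_{1},C_{2},D)$ joins $\overline{B(z_{0},\varepsilon)}$ to $D\setminus B(z_{0},\varepsilon_{1})$, hence contains a subpath crossing the annulus $A(z_{0},\varepsilon,\varepsilon_{1})$; the standard length/Fubini estimate then gives $\int_{\gamma}\rho\,ds\geqslant\int_{\varepsilon}^{\varepsilon_{1}}\eta(r)\,dr\geqslant1$ by~(\ref{eq8BC}), so $\rho$ is admissible for $\Gamma(C_{1},C_{2},D)$. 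Applying the core inequality with this $\rho$ and recalling that the integrand is supported in $D$ yields exactly~(\ref{eq3A}), the local integrability $K_{I,\alpha}\in L^{1}_{\rm loc}(D)$ rendering the right-hand annular integral meaningful. I expect the main obstacle to be the core inequality, and specifically the admissibility of $\widetilde\rho$: one must show that the family of paths in $f(D)$ that fail to lift, or along which $f$ violates absolute continuity, has zero $\alpha$-modulus. This rests on the total path-lifting property of open discrete closed mappings and on the validity of the area formula for planar finite-distortion Sobolev maps, with the boundary localization via $\varepsilon_{1}$ handling the fact that $z_{0}\in\partial D$.
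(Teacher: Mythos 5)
Your plan hinges on the ``core inequality'' $M_{\alpha}(f(\Gamma))\leqslant\int_{D}K_{I,\alpha}(z,f)\,\rho^{\alpha}(z)\,dm(z)$ for \emph{every} path family $\Gamma$ in $D$ and every $\rho\in{\rm adm}\,\Gamma$, i.e.\ on a full Poletsky-type inequality, and the step you yourself single out --- admissibility of $\widetilde{\rho}$ for $f(\Gamma)$ --- is a genuine gap that the tools you cite cannot close. Fuglede's theorem controls the family of paths \emph{in the domain} along which $f$ fails to be absolutely continuous, and only for the exponent matching the Sobolev class: a zero $\alpha$-modulus exceptional family requires $f\in W^{1,\alpha}_{\rm loc}$, whereas the hypotheses give only $f\in W^{1,1}_{\rm loc}$ (and $K_{I,\alpha}\in L^{1}_{\rm loc}$ does not upgrade the exponent). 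Worse, what admissibility of $\widetilde{\rho}$ actually needs is that the exceptional family \emph{in the image} $f(D)$ has zero $\alpha$-modulus; passing from $M_{\alpha}(\Gamma_{0})=0$ in $D$ to $M_{\alpha}(f(\Gamma_{0}))=0$ in $f(D)$ is itself an inequality of exactly the type being proved, so the argument is circular. The device that breaks this circle for quasiregular mappings is Poletsky's lemma (absolute continuity along every lifting of almost every path in the image); the total path-lifting theorem you invoke gives only the \emph{existence} of liftings, not this regularity, and no analogue of Poletsky's lemma is available for $W^{1,1}_{\rm loc}$ mappings of finite distortion with merely $K_{I,\alpha}\in L^{1}_{\rm loc}$, $1<\alpha\leqslant 2$. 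The appeal to ``Luzin's $(N)$-property and the area formula'' is likewise unjustified: condition $(N)$ can fail for finite-distortion Sobolev mappings without stronger integrability of the distortion, and without it the identity $\int_{f(D)}\sum_{z\in f^{-1}(y)}\rho^{\alpha}(z)l(z)^{-\alpha}\,dm(y)=\int_{D}K_{I,\alpha}\rho^{\alpha}\,dm(z)$ may fail (the image of the null set of non-differentiability can have positive measure). A structural warning sign: if your core inequality held, then~(\ref{eq3A}) would be true for \emph{any} $\varepsilon_{1}\in(0,\varepsilon_{0})$, so the theorem's insistence on choosing a special threshold $\varepsilon_{1}$ would be pointless.

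The paper's proof is designed precisely to avoid these obstacles, and it is genuinely different from yours. Theorem~\ref{thOS4.2} establishes the \emph{lower} estimate~(\ref{eq1A}) for the modulus of the images of the circles $S(z_{0},r)$ (separating sets); its proof needs only a.e.\ differentiability, Federer's countable bi-Lipschitz decomposition, and one-dimensional absolute continuity on almost every circle --- never absolute continuity along arbitrary image paths and no path lifting. (Your observation that closedness makes $f$ proper onto $f(D)$ is relevant exactly here: it yields $N(f,D)<\infty$, which Theorem~\ref{thOS4.2} requires.) This lower estimate for separating families is then converted into the upper bound~(\ref{eq3A}) for joining families by Proposition~\ref{pr2}, quoted from \cite[Theorem~5]{Sev$_2$} --- this, rather than Proposition~\ref{pr3}, is the statement fitting open, discrete and closed mappings at boundary points --- whose proof rests on the Ziemer--Hesse--Shlyk duality between the modulus of separating sets and the capacity of condensers; it is this dualization near $z_{0}\in\partial D$ that produces the threshold $\varepsilon_{1}$. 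To repair your argument you would have to prove a Poletsky lemma in this low-regularity setting, which is a substantial theorem and not a citation; as written, the central step of your proof remains unproved.
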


\section{On lower estimates of the modulus}

Let us give some important information concerning the relationship
between the moduli of the families of paths joining the sets and the
moduli of the families of the sets separating these sets. Mostly
this information can be found in Ziemer's publication,
see~\cite{Zi$_1$}. Let $G$ be a bounded domain in ${\Bbb R}^ n,$ and
$C_0, C_1$ are disjoint compact sets in $\overline{G}.$ Put $R=G
\setminus (C_{0} \cup C_{1})$ and $R^{\,*}=R \cup C_{0}\cup C_{1}.$
For a number $p>1 $ we define a {\it $p$ -capacity of the pair $C_0,
C_1 $ relative to the closure $G$} by the equality
$$C_p[G, C_0, C_1] = \inf \int\limits_{R} |\nabla u|^p\, dm(x),$$
where the exact lower bound is taken for all functions $u,$
continuous in $R^{\,*},$ $u\in ACL(R),$ such that $u=1$ on $C_1$ and
$u=0$ on $C_0.$ These functions are called {\it admissible} for
$C_p[G, C_0, C_1].$ We say that a set $\sigma \subset {\Bbb R}^n$
{\it separates} $C_0$ and $C_1$ in $R^{\,*},$ if $\sigma \cap R$ is
closed in $R$ and there are disjoint sets $A$ and $B,$ open relative
$R^{\,*}\setminus \sigma,$ such that $R^{\,*}\setminus \sigma=A\cup
B,$ $C_0\subset A$ and $C_1\subset B.$ Let $\Sigma$ denotes the
class of all sets that separate $C_0$ and $C_1$ in $R^{\,*}.$ For
the number $p^{\prime}=p/(p- 1)$ we define the quantity
\begin{equation}\label{eq13.4.12}
\widetilde{M_{p^{\prime}}}(\Sigma)=\inf\limits_{\rho\in
\widetilde{\rm adm} \Sigma} \int\limits_{{\Bbb
R}^n}\rho^{\,p^{\prime}}dm(x)
\end{equation}
where the notation $\rho\in \widetilde{\rm adm}\,\Sigma$ denotes
that $\rho$ is nonnegative Borel function in ${\Bbb R}^n$ such that
\begin{equation} \label{eq13.4.13}
\int\limits_{\sigma \cap R}\rho\, d{\mathcal H}^{n-1} \geqslant
1\quad\forall\, \sigma \in \Sigma\,. \end{equation}
Note that according to the result of Ziemer
\begin{equation}\label{eq3}
\widetilde{M_{p^{\,\prime}}}(\Sigma)=C_p[G , C_0 ,
C_1]^{\,-1/(p-1)}\,,
\end{equation}
see~\cite[Theorem~3.13]{Zi$_1$} for $p=n$ and \cite[p.~50]{Zi$_2$}
for $1<p<\infty,$ in addition, by the Hesse result
\begin{equation}\label{eq4}
M_p(\Gamma(E, F, D))= C_p[D, E, F]\,,
\end{equation}
where $(E \cup F)\cap
\partial D = \varnothing$ (see~\cite[Theorem~5.5]{Hes}). Shlyk has proved that the
requirement $(E \cup F)\cap
\partial D = \varnothing$ can be omitted, in other words, the equality~(\ref{eq4})
holds for any disjoint non-empty sets $E, F\subset \overline{D}$
(see~\cite[Theorem~1]{Shl}).

\medskip
Let $S$ be a surface, in other words, $S:D_s\rightarrow {\Bbb R}^n$
be a continuous mapping of an open set $D_s\subset {\Bbb R}^{n-1}.$
We put
$ N(y, S)={\rm card}\, S^{-1}(y)={\rm card} \{x\in D_s: S(x)=y\} $
and recall this function a {\it multiplicity function} of the
surface $S$ with respect to a point $y\in{\Bbb R}^n.$ Given a Borel
set $B\subset {\Bbb R}^n,$ its $(n-1)$-measured Hausdorff area
associated with the surface $S$ is determined by the formula
$ {\mathcal A}_S(B)={\mathcal A}_S^{n-1}(B)= \int\limits_B N(y, S)\,
d{\mathcal H}^{n-1} y, $
see~\cite[item~3.2.1]{Fe}. For a Borel function $\rho:\,{\Bbb
R}^n\rightarrow [0, \infty]$ its integral over the surface $S$ is
determined by the formula
$ \int\limits_S \rho\, d{\mathcal A}=\int\limits_{{\Bbb R}^n}
\rho(y) N(y, S)\,d{\mathcal H}^{n-1} y. $
In what follows, $J_kf(x)$ denotes the {\it $k$-dimensional
Jacobian} of the mapping $f$ at a point $x$ (see \cite[$\S\,3.2,$
Ch.~3]{Fe}).

\medskip
Let $n\geqslant 2,$ and let $\Gamma$  be a family of surfaces $S.$ A
Borel function $\rho\colon{\Bbb R}^n\rightarrow\overline{{\Bbb
R}^+}$ is called {\it an admissible} for $\Gamma,$ abbr.
$\rho\in{\rm adm}\,\Gamma,$ if
\begin{equation}\label{eq8.2.6}\int\limits_S\rho^{n-1}\,
d{\mathcal{A}}\geqslant 1\end{equation} for any $S\in\Gamma.$ Given
$p\in(0,\infty),$ a {\it $p$-modulus} of $\Gamma$ is called the
quantity
$$M_p(\Gamma)=\inf_{\rho\in{\rm adm}\,\Gamma} \int\limits_{{\Bbb
R}^n}\rho^p(x)\,dm(x)\,.$$ We also set
$M(\Gamma):=M_n(\Gamma).$
Let $p\geqslant 1.$ Let us say that some property $P$ holds for {\it
$p$-almost all surfaces} of the domain $D,$ if this property holds
for all surfaces in $D,$ except, maybe be, some of their subfamily,
$p$ -modulus of which is zero. If we are talking about the conformal
modulus $M(\Gamma):=M_n(\Gamma),$ the prefix ''$n$'' in the
expression ''$n$-almost all'', as a rule, is omitted. We say that a
Lebesgue measurable function $\rho\colon{\Bbb
R}^n\rightarrow\overline{{\Bbb R}^+}$ is {\it $p$-extensively
admissible} for the family $\Gamma$ of surfaces $S$ in ${\Bbb R}^n,$
abbr. $\rho\in{\rm ext}_p\,{\rm adm}\,\Gamma,$ if the
relation~(\ref{eq8.2.6}) is satisfied for $p$-almost all surfaces
$S$ of the family $\Gamma.$

\medskip
The next class of mappings is a generalization of quasiconformal
mappings in the sense of Gehring's ring definition (see
\cite{Ge$_3$}; it is the subject of a separate study, see, e.g.,
\cite[Chapter~9]{MRSY}). Let $D$ and $D^{\,\prime}$ be domains in
${\Bbb R}^n$ with $n\geqslant 2$. Suppose that $x_0\in\overline
{D}\setminus\{\infty\}$ and $Q\colon D\rightarrow(0,\infty)$ is a
Lebesgue measurable function. A function $f\colon D\rightarrow
D^{\,\prime}$ is called a {\it lower $Q$-mapping at a point $x_0$
relative to the $p$-modulus} if
\begin{equation}\label{eq1A}
M_p(f(\Sigma_{\varepsilon}))\geqslant \inf_{\rho\in{\rm ext}_p\,{\rm
adm}\Sigma_{\varepsilon}}\int\limits_{D\cap A(x_0, \varepsilon,
r_0)}\frac{\rho^p(x)}{Q(x)}\,dm(x)
\end{equation}
for every spherical ring $A(x_0, \varepsilon, r_0)=\{x\in {\Bbb
R}^n\,:\, \varepsilon<|x-x_0|<r_0\}$, $r_0\in(0,d_0)$,
$d_0=\sup_{x\in D}|x-x_0|$, where $\Sigma_{\varepsilon}$ is the
family of all intersections of the spheres $S(x_0, r)$ with the
domain $D$, $r\in (\varepsilon, r_0)$. If $p=n$, we say that $f$ is
a lower $Q$-mapping at $x_0$. We say that $f$ is a lower $Q$-mapping
relative to the $p$-modulus in $A\subset \overline {D}$ if
(\ref{eq1A}) is true for all $x_0\in A$.

\medskip The following statement can be proved much as Theorem 9.2 in \cite{MRSY}, so we omit the arguments.

\begin{lemma}\label{lem4} Let $D$,
$D^{\,\prime}\subset\overline{{\Bbb R}^n}$, let $x_0\in\overline
{D}\setminus\{\infty\}$, and let $Q$ be a Lebesgue measurable
function. A mapping $f\colon D\rightarrow D^{\,\prime}$ is a lower
$Q$-mapping relative to the $p$-modulus at a point $x_0$, $p>n-1$,
if and only if $M_p(f(\Sigma_{\varepsilon}))\geqslant
\int\limits_{\varepsilon}^{r_0} \frac{dr}{\|\,Q\|_{s}(r)}$ for all
$\varepsilon\in(0,r_0),\ r_0\in(0,d_0)$, $d_0=\sup_{x\in D}|x-x_0|$,
$s=\frac{n-1}{p-n+1}$, where, as above, $\Sigma_{\varepsilon}$
denotes the family of all intersections of the spheres $S(x_0, r)$
with $D$, $r\in (\varepsilon, r_0)$, $\|
Q\|_{s}(r)=(\int\limits_{D(x_0,r)}Q^{s}(x)\,d{\mathcal{A}})^{\frac{1}{s}}$
is the $L_{s}$-norm of $Q$ over the set $${D(x_0,r)=\{x\in D\,:\,
|x-x_0|=r\}=D\cap S(x_0,r)}\,.$$
\end{lemma}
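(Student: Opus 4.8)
The plan is to prove the equivalence in Lemma~\ref{lem4} by reducing the infimum over extensively admissible metrics in~(\ref{eq1A}) to an explicit one-dimensional integral, following the scheme of Theorem~9.2 in~\cite{MRSY}. The key observation is that the family $\Sigma_{\varepsilon}$ consists of the surfaces $D(x_0,r)=D\cap S(x_0,r)$ parametrized by the radius $r\in(\varepsilon,r_0)$, so the problem of minimizing $\int_{D\cap A}\rho^p(x)/Q(x)\,dm(x)$ over $\rho\in{\rm ext}_p\,{\rm adm}\,\Sigma_{\varepsilon}$ is a constrained variational problem whose constraint $\int_{D(x_0,r)}\rho^{n-1}\,d{\mathcal A}\geqslant 1$ must hold for $p$-almost all $r$.

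First I would pass to polar-type coordinates adapted to the spheres $S(x_0,r)$, writing the Lebesgue integral over the ring $A(x_0,\varepsilon,r_0)$ as an iterated integral, an outer integral in $r$ and an inner integral over $D(x_0,r)$ against the Hausdorff area $d{\mathcal H}^{n-1}$ (this is the coarea-type disintegration underlying the definitions of $M_p$ for surface families). Then, for each fixed $r$, I would minimize the inner integral $\int_{D(x_0,r)}\rho^p/Q\,d{\mathcal H}^{n-1}$ subject to $\int_{D(x_0,r)}\rho^{n-1}\,d{\mathcal H}^{n-1}\geqslant 1$. This is a pointwise H\"older/Lagrange-multiplier computation: writing the objective exponent as $p$ and the constraint exponent as $n-1$, the optimal $\rho$ on each sphere is proportional to $Q^{s'}$ for the appropriate exponent, and the resulting minimal value is $\|Q\|_s(r)^{-1}$ with $s=\frac{n-1}{p-n+1}$, which is exactly the reciprocal appearing in the claimed integral. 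The condition $p>n-1$ is precisely what makes $s$ positive and the H\"older duality go through in the correct direction.

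The main obstacle I expect is handling the qualifier ``$p$-almost all surfaces'' rigorously, rather than ``all surfaces''. Extensive admissibility only requires the constraint~(\ref{eq8.2.6}) off a subfamily of vanishing $p$-modulus, so I must verify that the sphere family $\{D(x_0,r)\}$ has the property that a subfamily of $p$-modulus zero corresponds to a set of radii $r$ of linear (Lebesgue) measure zero, and hence does not affect the outer integral $\int_\varepsilon^{r_0}dr/\|Q\|_s(r)$. This requires a Fubini-type argument relating the $p$-modulus of a subfamily of concentric spherical sections to the one-dimensional measure of the corresponding radii, which is the technical heart borrowed from~\cite{MRSY}.

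Once the inner minimization yields $\|Q\|_s(r)^{-1}$ for almost every $r$, the two directions of the equivalence follow: substituting the optimal $\rho$ shows that the right-hand side of~(\ref{eq1A}) equals $\int_\varepsilon^{r_0}dr/\|Q\|_s(r)$, giving the ``only if'' bound, while the explicit admissible metric built from $Q^{s'}$ realizes this value and furnishes the ``if'' direction. Since these steps are a verbatim adaptation of the cited theorem with $n$ replaced by the general exponent $p$, the routine estimates can be suppressed, as the authors themselves indicate by writing ``we omit the arguments''.
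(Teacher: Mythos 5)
Your proposal is correct and takes essentially the same approach as the paper: the authors give no argument at all, saying only that the lemma ``can be proved much as Theorem 9.2 in \cite{MRSY}'', and your sketch --- the Fubini/coarea disintegration over the spheres $D(x_0,r)$, the per-sphere H\"older minimization yielding the value $\|Q\|_s(r)^{-1}$ with $s=\frac{n-1}{p-n+1}$, the near-extremal metric $\rho\sim Q^{1/(p-n+1)}$, and the reduction of ``$p$-almost all spheres'' to ``almost all radii'' --- is precisely the content of that cited argument, transcribed for general $p>n-1$. The only items you suppress (measurability of $r\mapsto\|Q\|_s(r)$ and the degenerate cases $\|Q\|_s(r)=0,\infty$) are routine and are equally suppressed by the authors.
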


\medskip
The following statement holds, cf.~\cite[Theorem~2.1]{KR$_1$},
\cite[Lemma~2.3]{SSP} and \cite[Lemma~2]{Sev$_5$}.

\begin{theorem}\label{thOS4.2}{\sl\, Let $p>1$ and let
$f:D\rightarrow {\Bbb C}$ be an open discrete mapping of a finite
distortion such that $N(f, D)<\infty.$ Then $f$ satisfies the
relation~(\ref{eq1A}) at any $z_0\in\overline{D}$ for $Q(z)=N(f,
D)\cdot K^{p-1}_{I, \alpha}(z, f),$ where $\alpha:=\frac{p}{p-1},$
$K_{I,\alpha}(z, f)$ is defined by (\ref{eq1}) and $N(f, D)$ is
defined in~(\ref{eq1.7A}).}
\end{theorem}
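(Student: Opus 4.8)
The plan is to verify the defining inequality~(\ref{eq1A}) of a lower $Q$-mapping directly, by the standard pull-back construction for such mappings. Working in the plane ($n=2$), the members of $\Sigma_{\varepsilon}$ are the arcs $D(z_0,r)=D\cap S(z_0,r)$, $r\in(\varepsilon,r_0)$, and $f(\Sigma_{\varepsilon})$ consists of their images. For an arbitrary $\widetilde\rho\in{\rm adm}\,f(\Sigma_{\varepsilon})$ I would produce a function $\rho$ on $D\cap A(z_0,\varepsilon,r_0)$ that is $p$-extensively admissible for $\Sigma_{\varepsilon}$ and obeys $\int_{D\cap A}\rho^{p}/Q\,dm\leqslant\int_{{\Bbb C}}\widetilde\rho^{\,p}\,dm$; passing to the infimum over $\widetilde\rho$ then gives $M_p(f(\Sigma_{\varepsilon}))\geqslant\inf_{\rho}\int_{D\cap A}\rho^{p}/Q\,dm$, which is precisely~(\ref{eq1A}). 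Before the construction I would record the two regularity facts it relies on: since $f$ is sense-preserving, open and has partial derivatives a.e., the Gehring--Lehto theorem gives differentiability a.e.\ in $D$; and since $f$ is of finite distortion, $\|f^{\,\prime}\|^{2}\leqslant K\,J(\cdot,f)$ with $K<\infty$ a.e., so $f^{\,\prime}=0$ at a.e.\ point where $J(\cdot,f)=0$, whence $K_{I,\alpha}(\cdot,f)<\infty$ a.e.\ and $Q$ is a well-defined measurable weight.

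The function I would take is $\rho(x)=\widetilde\rho(f(x))\,\|f^{\,\prime}(x)\|$ on $D\cap A(z_0,\varepsilon,r_0)$ and $\rho=0$ elsewhere. To see that $\rho\in{\rm ext}_p\,{\rm adm}\,\Sigma_{\varepsilon}$, I would fix a radius $r$ for which $f|_{S(z_0,r)}$ is absolutely continuous and the one-dimensional area formula is valid on $D(z_0,r)$. Parametrising $D(z_0,r)$ by arc length and writing $e_\tau(x)$ for the unit tangent, the bound $|f^{\,\prime}(x)e_\tau(x)|\leqslant\|f^{\,\prime}(x)\|$ together with the area formula yields
$$\int\limits_{D(z_0,r)}\!\!\rho\,d{\mathcal H}^{1}\geqslant\int\limits_{D(z_0,r)}\!\!\widetilde\rho(f(x))\,|f^{\,\prime}(x)e_\tau(x)|\,d{\mathcal H}^{1}x=\int\limits_{f(D(z_0,r))}\!\!\widetilde\rho\,d{\mathcal A}\geqslant1\,,$$
the last inequality being the admissibility of $\widetilde\rho$. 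Such $r$ are all but a set of measure zero, and a Fubini/polar-coordinates argument shows that the exceptional radii form a subfamily of $\Sigma_{\varepsilon}$ of $p$-modulus zero, so that extensive admissibility holds.

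The estimate then rests on a single pointwise identity. Writing $l=|f_z|-|f_{\overline{z}}|$ and using $\alpha=p/(p-1)$, hence $\alpha(p-1)=p$, and $J=l\,\|f^{\,\prime}\|$, I would compute
$$K_{I,\alpha}^{\,p-1}(x,f)\cdot J(x,f)=\frac{J^{\,p}}{l^{\,\alpha(p-1)}}=\frac{J^{\,p}}{l^{\,p}}=\Bigl(\frac{J}{l}\Bigr)^{\!p}=\|f^{\,\prime}(x)\|^{p}$$
at a.e.\ point with $J(x,f)>0$, while at a.e.\ remaining point $f^{\,\prime}(x)=0$ and both sides vanish. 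Thus $\rho^{p}/Q=\widetilde\rho^{\,p}(f)\,\|f^{\,\prime}\|^{p}/(N(f,D)\,K_{I,\alpha}^{\,p-1})=\widetilde\rho^{\,p}(f)\,J(\cdot,f)/N(f,D)$ a.e.; then, extending the domain to $D$ and applying the area formula with $N(y,f,D)\leqslant N(f,D)$,
$$\int\limits_{D\cap A}\frac{\rho^{p}}{Q}\,dm\leqslant\frac1{N(f,D)}\int\limits_{D}\widetilde\rho^{\,p}(f(x))\,J(x,f)\,dm(x)=\frac1{N(f,D)}\int\limits_{{\Bbb C}}\widetilde\rho^{\,p}(y)\,N(y,f,D)\,dm(y)\leqslant\int\limits_{{\Bbb C}}\widetilde\rho^{\,p}\,dm\,.$$
Since $\rho$ is extensively admissible, the infimum on the right of~(\ref{eq1A}) is at most $\int_{{\Bbb C}}\widetilde\rho^{\,p}\,dm$, and taking the infimum over $\widetilde\rho\in{\rm adm}\,f(\Sigma_{\varepsilon})$ gives~(\ref{eq1A}).

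The only genuinely delicate step is the extensive admissibility: I must show that the radii for which $f|_{S(z_0,r)}$ fails to be absolutely continuous, or for which the tangential area formula or the Lusin $(N)$ property on the circle breaks down, form a subfamily of $p$-modulus zero. This is exactly where openness, discreteness and finiteness of distortion are used, and I would carry it out as in~\cite[Theorem~2.1]{KR$_1$} and~\cite[Lemma~2.3]{SSP}; the remaining computations above are routine.
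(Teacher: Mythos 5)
Your proposal is correct and is essentially the paper's own argument: your pull-back density $\widetilde\rho(f(x))\,\Vert f^{\,\prime}(x)\Vert$ coincides with the paper's $\rho_*(f(z))\,\frac{|f_z|^2-|f_{\overline{z}}|^2}{|f_z|-|f_{\overline{z}}|}$, your tangential bound $|f^{\,\prime}(x)e_\tau(x)|\leqslant\Vert f^{\,\prime}(x)\Vert$ combined with the one-dimensional area formula is exactly the paper's admissibility estimate on almost every circle, and your concluding pointwise identity $K_{I,\alpha}^{\,p-1}\cdot J=\Vert f^{\,\prime}\Vert^p$ plus the change of variables with $N(y,f,D)\leqslant N(f,D)$ is the paper's final computation. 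The only difference is presentational: the paper carries out in full the measure-theoretic step you defer to \cite{KR$_1$} and \cite{SSP} --- Stoilow factorization for a.e.\ differentiability, the decomposition of the differentiability set into bi-Lipschitz Borel pieces, the verification that ${\mathcal H}^{\,1}(f(B_0\cap S_r))={\mathcal H}^{\,1}(f(B_*\cap S_r))=0$ for almost all $r$, and Lemma~4.1 of \cite{IS} to pass from a.e.\ radii to $p$-almost all circles --- which is precisely the delicate step you flag.
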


\begin{proof}
In many ways, the proof of this lemma uses the scheme outlined
in~\cite[Theorem~4]{Sev$_1$}. Observe that, $f=\varphi\circ g,$
where $g$ is some homeomorphism and $\varphi$ is an analytic
function, see (\cite[5.III.V]{St}). Thus, $f$ is differentiable
almost everywhere (see, e.g., \cite[Theorem~III.3.1]{LV}). Let $B$
be a Borel set of all points $z\in D,$ where $f$ has a total
differential $f^{\,\prime}(z)$ and $J(z, f)\ne 0$. Observe that, $B$
may be represented as a countable union of Borel sets $B_l$,
$l=1,2,\ldots\,,$ such that $f_l=f|_{B_l}$ are bi-lipschitzian
homeomorphisms (see \cite[items~3.2.2, 3.1.4 and 3.1.8]{Fe}).
Without loss of generality, we may assume that the sets $B_l$ are
pairwise disjoint. Denote by $B_*$ the set of all points $z\in D$ in
which $f$ has a total differential, however, $f^{\,\prime}(z)=0.$

Since $f$ is of finite distortion, $f^{\,\prime}(z)=0$ for almost
all $z,$ where $J(z, f)=0.$ Thus, by the construction the set
$B_0:=D\setminus \left(B\bigcup B_*\right)$ has a zero Lebesgue
measure. Thus, by~\cite[Theorem~9.1]{MRSY}, ${\mathcal
H}^{\,1}(B_0\cap S_r)=0$ for almost all circles $S_r:=S(z_0,r)$
centered at $z_0\in\overline{D},$ where, as usually, ${\mathcal
H}^{\,1}$ denotes the linear Hausdorff measure, and ''almost all''
means in the sense of $p$-modulus. Observe that, a function
$\psi(r):={\mathcal H}^{\,1}(B_0\cap S_r)$ is Lebesgue measurable by
the Fubini theorem, thus, the set $E=\{r\in {\Bbb R}: {\mathcal
H}^{\,1}(B_0\cap S_r)=0\}$ is Lebesgue measurable. Now, by Lemma~4.1
in \cite{IS} we obtain that ${\mathcal H}^{\,1}(B_0\cap S_r)=0$ for
almost any $r\in {\Bbb R}.$

\medskip
Denote by $D_*:=B(z_0, \varepsilon_0)\cap D\setminus\{z_0\},$
$0<\varepsilon_0<d_0=\sup\limits_{z\in D}|z-z_0|,$ and consider the
division of $D_*$ by the ring segments $A_k,$ $k=1,2,\ldots\,.$ Let
$\varphi_k$ be auxiliary quasiisometry which maps $A_k$ onto
rectangular $\widetilde{A_k}$ such that arcs of circles map onto
line segments. (For instance, we may take
$\varphi_k(\omega)=\log(\omega-z_0),$ $\omega\in A_k$). Consider a
family of mappings $g_k=f\circ\varphi_k^{\,-1},$
$g_k:\widetilde{A_k}\rightarrow {\Bbb C}.$ Observe that, $g_k\in
W^{1, 1}_{\rm loc}$ (see e.g. \cite[section~1.1.7]{Ma}). Thus,
$g_k\in ACL$ (see \cite[Theorems~1 and 2, item~I.1.1.3]{Ma}). Since
absolute continuity on a fixed segment implies $N$-property with a
respect to Lebesgue measure (see \cite[section~2.10.13]{Fe}), we
obtain that ${\mathcal H}^{\,1}((g_k\circ\varphi_k)(B_0\cap A_k\cap
S_r))={\mathcal H}^{\,1}(f(B_0\cap A_k\cap S_r))=0.$ Therefore, by
the subadditivity of the Hausdorff measure, we obtain that
${\mathcal H}^{\,1}(f(B_0\cap S_r))=0$ for almost all $r\in {\Bbb
R}.$

\medskip
Let us to show that ${\mathcal H}^{\,1}(f(B_*\cap S_r))=0$ for
almost any $r\in {\Bbb R}.$ Indeed, let $\varphi_k,$ $g_k$ and $A_k$
be such as above $A_k=\{z\in {\Bbb C}: z-z_0=re^{i\varphi}, r\in
(r_{k-1}, r_{k}), \varphi\in (\psi_{k-1}, \psi_k)\},$ and let
$S_k(r)$ be a part of the sphere $S(z_0, r)$ which belongs to the
segment $A_k,$ i.e. $S_k(r)=\{z\in {\Bbb C}: z-z_0=re^{i\varphi},
\varphi\in (\psi_{k-1}, \psi_k)\}.$ By the construction, $\varphi_k$
maps $S_k(r)$ onto the segment $I(k, r)=\{z\in {\Bbb C}: z=\log
r+it, t\in (\psi_{k-1}, \psi_k).$ Applying \cite[theorem~3.2.5]{Fe},
we obtain that
$${\mathcal H}^{\,1}(g_k(\varphi_k(B_*\cap S_k(r))))\leqslant
\int\limits_{g_k(\varphi_k(B_*\cap S_k(r)))}N(y, g_k,
\varphi_k(B_*\cap S_k(r)))d{\mathcal H}^{1}y=$$
$$=\int\limits_{\varphi_k(B_*\cap
S_k(r))}|g_k^{\,\prime}(r+te)|dt=0$$
for almost all $r\in (r_{k-1}, r_{k}).$ Thus, ${\mathcal
H}^{\,1}(f(B_*\cap S_k(r)))=0$ for almost all $r\in (r_{k-1},
r_{k}).$ By subadditivity of the Hausdorff measure we obtain that
${\mathcal H}^{\,1}(f(B_*\cap S_r))=0$ for almost all $r\in {\Bbb
R},$ as required.

\medskip
Let $\Gamma$ be a family of all intersections of circles $S_r$,
$r\in(\varepsilon,\varepsilon_0)$,
$\varepsilon_0<d_0=\sup\limits_{z\in D}\,|z-z_0|,$ with a domain
$D.$ Given an admissible function $\rho_*\in{\rm adm}\,f(\Gamma),$
$\rho_*\equiv 0$ outside of $f(D)$, we set $\rho\equiv 0$ outside of
$D$ and on $B_0,$ and
$$\rho(z)\colon=\rho_*(f(z))\left(\frac{|f_z|^2-|f_{\overline{z}}|^2}
{|f_z|-|f_{\overline{z}}|}\right) \qquad\text{for}\ z\in D\setminus
B_0\,.$$

Given a fixed domain $D_{r}^{\,*}\in f(\Gamma),$
$D_{r}^{\,*}=f(S_r\cap D),$ observe that
$$D_{r}^{\,*}=\bigcup\limits_{i=0}^{\infty} f(S_r\cap
B_i)\bigcup f(S_r\cap B_*)\,,$$
and, consequently, for almost all $r\in (0, \varepsilon_0),$ we
obtain that
\begin{equation}\label{eq10a}
1\leqslant \int\limits_{D^{\,*}_r}\rho_*(y)d{\mathcal A_*}=
\sum\limits_{i=0}^{\infty} \int \limits_{f(S_r\cap B_i)} N (y,
S_r\cap B_i)\rho_*(y) d{\mathcal H}^{1}y +
\end{equation}
$$+\int\limits_{f(S_r\cap B_*)}N (y,
S_r\cap B_*) \rho_*(y) d{\mathcal H}^{\,1}y\,.$$ Due to the above,
by~(\ref{eq10a}) we obtain that
\begin{equation}\label{eq11A}
1\leqslant \int\limits_{D^{\,*}_r}\rho_*(y)d{\mathcal A_*}=
\sum\limits_{i=1}^{\infty} \int \limits_{f(S_r\cap B_i)}N (y,
S_r\cap B_i) \rho_*(y) d{\mathcal H}^{\,1}y
\end{equation}
for almost all $r\in (0, \varepsilon_0).$
Observe that,
$l(f^{\,\prime}(z)):=\min\limits_{|h|=1}|f^{\,\prime}(z)h|=|f_z|-|f_{\overline{z}}|,$
see \cite[relation~(10).A.I]{A}. Now, arguing on each~$B_i$,
$i=1,2,\ldots$, by \cite[item~1.7.6 and Theorem~3.2.5]{Fe} we obtain
that
$$\int\limits_{B_i\cap S_r}\rho\,d{\mathcal A}=
\int\limits_{B_i\cap
S_r}\rho_*(f(z))\left(\frac{|f_z|^2-|f_{\overline{z}}|^2}
{|f_z|-|f_{\overline{z}}|}\right)\,d{\mathcal A}=$$
$$=\int\limits_{B_i\cap S_r}\rho_*(f(z))\cdot
\left(\frac{|f_z|^2-|f_{\overline{z}}|^2}
{|f_z|-|f_{\overline{z}}|}\right)\cdot \frac{1}{\frac{d{\mathcal
A_*}}{d{\mathcal A}}}\cdot \frac{d{\mathcal A_*}}{d{\mathcal
A}}\,d{\mathcal A}\geqslant \int\limits_{B_i\cap
S_r}\rho_*(f(z))\cdot \frac{d{\mathcal A_*}}{d{\mathcal
A}}\,d{\mathcal A}=$$
\begin{equation}\label{eq12A}
=\int\limits_{f(B_i\cap S_r)}\rho_{*}\,d{\mathcal A_*}
\end{equation} for almost any $r\in (0, \varepsilon_0).$
By~(\ref{eq11A}) and (\ref{eq12A}), and by~\cite[Lemma~4.1]{IS} we
obtain that $\rho\in{\rm{ext}}_p{\rm\,adm}\,\Gamma.$

\medskip
Using the change of variables on $B_l$, $l=1,2,\ldots$ (see, e.g.,
\cite[Theorem~3.2.5]{Fe}), by the countable additivity of the
Lebesgue integral we obtain that
$$\int\limits_{D}\frac{\rho^p(z)}{K^{p-1}_{I,
\frac{p}{p-1}}(z)}\,dm(z)=\sum\limits_{l=1}^{\infty}\int\limits_{B_l}
\frac{\rho^p(z){(|f_{z}|-|f_{\overline{z}}|)}^{p}}{{|J(z,
f)|}^{p-1}}\,dm(z)=$$
$$=\sum\limits_{l=1}^{\infty}\int\limits_{B_l}
\frac{{(|f_{z}|-|f_{\overline{z}}|)}^{p}}{{|J(z, f)|}^{p-1}}\cdot
\rho^p_*(f(z))\left(\frac{{(|f_z|^2-|f_{\overline{z}}|^2)}^{p}}
{{(|f_z|-|f_{\overline{z}}|)}^{p}}\right)\,\,dm(z)=$$
$$=\sum\limits_{l=1}^{\infty}\int\limits_{B_l}\rho^p_*(f(z))|J(z, f)|\,dm(z)
=\sum\limits_{l=1}^{\infty}\int\limits_{f(B_l)}\rho^p_*(y)\,dm(y)
\leqslant \int\limits_{f(D)}N(f, D)\rho^p_*(y)\,dm(y)\,,$$ as
required.
\end{proof}

We also need the following statement given in
\cite[Proposition~10.2, Ch.~II]{Ri}.

\begin{proposition}\label{pr1}
Let $E=(A,\,C)$ be a condenser in ${\Bbb R}^n$ and let $\Gamma_E$ be
the family of all paths of the form $\gamma:[a,\,b)\rightarrow A$
with $\gamma(a)\in C$ and $|\gamma|\cap\left(A\setminus
F\right)\ne\varnothing$ for every compact $F\subset A.$ Then ${\rm
cap}_q\,E= M_q\left(\Gamma_E\right).$
\end{proposition}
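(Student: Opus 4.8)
The plan is to establish the two inequalities ${\rm cap}_q\,E\geqslant M_q(\Gamma_E)$ and ${\rm cap}_q\,E\leqslant M_q(\Gamma_E)$ separately; their combination gives the asserted equality. Here $q$ is the common exponent appearing both in the capacity~(\ref{eq4G}) and in the path $q$-modulus $M_q$ (the latter in the sense of Väisälä, so $\rho\in{\rm adm}\,\Gamma_E$ means $\int_\gamma\rho\,ds\geqslant 1$ for every $\gamma\in\Gamma_E$), while admissibility of $u$ for $E$ means $u\in W_0(E)$ as in the definition preceding~(\ref{eq4G}).

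For the bound $M_q(\Gamma_E)\leqslant{\rm cap}_q\,E$ I would start from an arbitrary $u\in W_0(E)$, extend it by zero to all of ${\Bbb R}^n$ (which keeps it ACL since its support is compactly contained in $A$), and set $\rho:=|\nabla u|$. By the Fuglede theorem, $u$ is absolutely continuous along $q$-almost every path. For such a $\gamma\in\Gamma_E$ we have $\gamma(a)\in C$, hence $u(\gamma(a))\geqslant 1$, while the defining condition $|\gamma|\cap(A\setminus F)\ne\varnothing$ for every compact $F\subset A$ forces $\gamma$ to leave the support of $u$, so $u(\gamma(t))\to 0$ as $t\to b$. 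Therefore
$$\int_\gamma\rho\,ds=\int_\gamma|\nabla u|\,ds\geqslant\Bigl|u(\gamma(a))-\lim_{t\to b}u(\gamma(t))\Bigr|\geqslant 1$$
for $q$-almost every $\gamma\in\Gamma_E$. Discarding the exceptional subfamily of zero $q$-modulus, $\rho$ is admissible for $\Gamma_E$, whence $M_q(\Gamma_E)\leqslant\int_{{\Bbb R}^n}|\nabla u|^q\,dm$; taking the infimum over $u\in W_0(E)$ yields the inequality.

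For the reverse bound ${\rm cap}_q\,E\leqslant M_q(\Gamma_E)$ I would fix a Borel $\rho\in{\rm adm}\,\Gamma_E$ with $\int_{{\Bbb R}^n}\rho^q\,dm<\infty$ and build the associated potential. For $x\in A$ let $F_x$ be the family of paths $\gamma\colon[a,b)\to A$ with $\gamma(a)=x$ that leave every compact subset of $A$, and put $u(x):=\min\{1,\ \inf_{\gamma\in F_x}\int_\gamma\rho\,ds\}$. Since $F_x\subset\Gamma_E$ for $x\in C$, admissibility forces $u\equiv 1$ on $C$. The function $v(x):=\inf_{\gamma\in F_x}\int_\gamma\rho\,ds$ satisfies the metric-Lipschitz estimate $|v(x)-v(y)|\leqslant\int_{[x,y]}\rho\,ds$ whenever $[x,y]\subset A$ (prepend the segment $[x,y]$ to a nearly optimal path from $y$), and the Lebesgue differentiation theorem then gives $u\in W^{1,1}_{\rm loc}(A)$, $u$ ACL, with $|\nabla u|\leqslant\rho$ a.e. After truncating at $1$ and cutting off to ensure compact support in $A$ without disturbing the values on $C$, the function $u$ becomes admissible for $E$, so ${\rm cap}_q\,E\leqslant\int_A|\nabla u|^q\,dm\leqslant\int_{{\Bbb R}^n}\rho^q\,dm$; the infimum over $\rho$ completes the argument.

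The main obstacle is the regularity analysis in the second direction: verifying that the $\rho$-distance potential $v$ is ACL with the pointwise gradient bound $|\nabla u|\leqslant\rho$ a.e., and arranging the compact support demanded by $W_0(E)$ while preserving $u=1$ on $C$. A secondary technical point is the correct handling of the Fuglede exceptional family in the first direction, so that altering $\rho$ on a family of zero $q$-modulus leaves $M_q(\Gamma_E)$ unchanged. Both points are classical and constitute exactly the content of \cite[Proposition~10.2, Ch.~II]{Ri}, so I would ultimately invoke that reference rather than reproduce the full verification.
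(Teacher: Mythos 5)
The paper offers no proof of this proposition at all: it is quoted directly from Rickman (\cite[Proposition~10.2, Ch.~II]{Ri}), and since your argument, after sketching the classical two-inequality scheme (Fuglede's theorem for $M_q(\Gamma_E)\leqslant{\rm cap}_q\,E$, the $\rho$-distance potential for the converse), ultimately defers the technical core to that same reference, your treatment is consistent with --- indeed more detailed than --- the paper's. Your sketch is sound in outline, and you correctly flag the genuinely hard points (the ACL regularity of the potential $v$ and the compact-support cutoff that must not spoil $|\nabla u|\leqslant\rho$) as exactly the content supplied by the cited result.
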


\medskip
An analogue of the following assertion has been proved several times
earlier under slightly different conditions,
see~\cite[Lemma~4.2]{SevSkv$_1$} and \cite[Lemma~5]{Sev$_1$}. In the
formulation given below, this result is proved for the first time.

 \begin{lemma}\label{l4.4}
{ Let $D$ be a domain in ${\Bbb R}^n,$ $n\geqslant 2,$ let $p>n-1,$
let $x_0\in D$ and let $f:D\rightarrow {\Bbb R}^n$ be an open and
discrete mapping satisfying the relation~(\ref{eq1A}) at a point
$x_0.$ Assume that $Q\colon D\rightarrow[0,\infty]$ is a Lebesgue
measurable function which is locally integrable in the degree
$s=\frac{n-1}{p-n+1}$ in $D.$ Then the relation
\begin{equation}\label{eq4A}
{\rm cap}_{\alpha}\, f(\mathcal{E})\leqslant\int\limits_{A}
Q^{\,*}(x)\cdot \eta^{\alpha}(|x-x_0|)\, dm(x)
\end{equation}
holds for $\alpha=\frac{p}{p-n+1}$ and
$Q^{\,*}(x)=Q^{\frac{n-1}{p-n+1}}(x),$ where $\mathcal{E}=(B(x_0,
r_2), \overline{B(x_0, r_1)}),$ $A=A(x_0, r_1, r_2),$
$0<r_1<r_2<\varepsilon_0:={\rm dist}\,(x_0,
\partial D),$ and $\eta \colon (r_1,r_2)\rightarrow [0,\infty ]$ may
be chosen as arbitrary nonnegative Lebesgue measurable function
satisfying the relation~(\ref{eq8BC}).}
 \end{lemma}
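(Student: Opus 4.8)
The plan is to connect the capacity of the condenser $\mathcal{E}$ to a modulus of a path family, then dualize this through Ziemer's equality to the modulus of a separating family of surfaces, which here are intersections of spheres $S(x_0,r)$ with $D$. First I would invoke Proposition~\ref{pr1} to write ${\rm cap}_{\alpha}\, f(\mathcal{E}) = M_{\alpha}(\Gamma_{f(\mathcal{E})})$, where $\Gamma_{f(\mathcal{E})}$ is the family of paths described there. The natural next move is to relate this to the modulus of paths joining the two boundary components of the image condenser, and then to apply the Hesse--Shlyk identity~(\ref{eq4}) together with Ziemer's duality~(\ref{eq3}) to pass from the $\alpha$-modulus of a connecting family to the $\alpha'$-capacity, i.e. to the dual modulus $\widetilde{M_{\alpha'}}(\Sigma)$ of the family $\Sigma$ of sets separating the two components of $\mathcal{E}$ in the source. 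Since $\alpha = \frac{p}{p-n+1}$, its conjugate exponent is tied to $p$ in precisely the way that makes the separating family consist of (spherical slices dual to) the family $\Sigma_{\varepsilon}$ appearing in the lower $Q$-mapping condition~(\ref{eq1A}).

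The second main step is to use the hypothesis that $f$ satisfies~(\ref{eq1A}) at $x_0$, i.e. that $f$ is a lower $Q$-mapping relative to the $p$-modulus. By Lemma~\ref{lem4}, this is equivalent to the estimate
$$
M_p(f(\Sigma_{\varepsilon})) \geqslant \int\limits_{r_1}^{r_2} \frac{dr}{\|Q\|_s(r)}\,,
$$
with $s = \frac{n-1}{p-n+1}$ and $\|Q\|_s(r)$ the $L_s$-norm of $Q$ over the sphere slice $D(x_0,r)$. Combining this lower bound for the modulus of the image of the spherical family with the duality computed in the first step, one obtains an upper bound for ${\rm cap}_{\alpha}\, f(\mathcal{E})$ in terms of $\left(\int_{r_1}^{r_2} \frac{dr}{\|Q\|_s(r)}\right)^{-(p-1)}$ or an analogous reciprocal quantity. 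The remaining work is to recognize this reciprocal as an integral of $Q^{\,*} = Q^{s}$ against a weight, which is where the function $\eta$ enters.

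The final step is the passage to the stated integral form using an arbitrary admissible $\eta$ with $\int_{r_1}^{r_2}\eta(r)\,dr \geqslant 1$. The idea is to choose, for each fixed $\eta$, a near-extremal quantity so that the dual modulus bound transforms into
$$
{\rm cap}_{\alpha}\, f(\mathcal{E}) \leqslant \int\limits_{r_1}^{r_2} \|Q\|_s^{\,s}(r)\,\eta^{\alpha}(r)\,dr\,,
$$
and then to unfold $\|Q\|_s^{\,s}(r) = \int_{D(x_0,r)} Q^{s}\, d\mathcal{A}$ and integrate in $r$ by the coarea/Fubini principle so that the right-hand side becomes $\int_A Q^{s}(x)\,\eta^{\alpha}(|x-x_0|)\,dm(x)$, which is exactly~(\ref{eq4A}) since $Q^{\,*} = Q^{\frac{n-1}{p-n+1}} = Q^{s}$ and $\alpha = \frac{p}{p-n+1}$.

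I expect the main obstacle to be the exponent bookkeeping in the dualization step: verifying that the conjugate modulus $\widetilde{M_{\alpha'}}$ produced by Ziemer's and Hesse--Shlyk's identities matches the $p$-modulus of $\Sigma_{\varepsilon}$ controlled by Lemma~\ref{lem4}, with $s$, $p$, $\alpha$, and $\alpha' = \alpha/(\alpha-1)$ all consistent. In particular, one must confirm that $\widetilde{M_{\alpha'}}(\Sigma)$ is comparable to $M_p(f(\Sigma_{\varepsilon}))$ after transport by $f$, and that the reciprocal relation~(\ref{eq3}) converts the lower bound on the separating-surface modulus into the desired upper bound on ${\rm cap}_{\alpha}$. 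Once the exponents are pinned down, the selection of the extremal admissible metric built from $\eta$ and the final coarea unfolding are routine, following the scheme of~\cite[Lemma~4.2]{SevSkv$_1$} and~\cite[Lemma~5]{Sev$_1$}.
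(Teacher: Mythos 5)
Your proposal is correct and takes essentially the same route as the paper's proof: Proposition~\ref{pr1}, the Hesse--Shlyk identity~(\ref{eq4}) combined with Ziemer's duality~(\ref{eq3}), and Lemma~\ref{lem4} applied to the separating family of sphere images, followed by a H\"older--Fubini conversion to the $\eta$-form (a step the paper delegates to Lemma~2 of~\cite{SalSev} rather than doing inline). The only blemish is the exponent $-(p-1)$ in your intermediate reciprocal bound, which should be $-(\alpha-1)=-s$; your final displayed inequality ${\rm cap}_{\alpha}\,f(\mathcal{E})\leqslant\int_{r_1}^{r_2}\Vert Q\Vert_s^{\,s}(r)\,\eta^{\alpha}(r)\,dr$ is nevertheless the correct one, as it follows from the reciprocal bound by H\"older's inequality with exponents $\alpha$ and $\alpha/(\alpha-1)$.
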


\begin{proof} Observe that $s=\alpha-1.$ By Lemma~2 in~\cite{SalSev}, it is sufficiently to prove that
$${\rm cap_{\alpha}}\,
f(\mathcal{E})\leqslant \frac{\omega_{n-1}}{I^{*\,\alpha-1}}\,,$$
where $\mathcal{E}$ is a condenser $\mathcal{E}=(B(x_0, r_2),
\overline{B(x_0, r_1)}),$ and $q^{\,*}_{x_0}(r)$ denotes the
integral average of $Q^{\alpha-1}(x)$ under $S(x_0, r),$
\begin{equation}\label{eq16}
q_{x_0}(r)=\frac{1}{\omega_{n-1}r^{n-1}}\int\limits_{S(x_0,
r)}Q(x)\,d\mathcal{H}^{n-1}\,,
\end{equation}
$\omega_{n-1}$ denotes the area of the unit sphere ${\Bbb S}^{n-1}$
in ${\Bbb R}^n$ and $$I^{\,*}=I^{\,*}(x_0,
r_1,r_2)=\int\limits_{r_1}^{r_2}\
\frac{dr}{r^{\frac{n-1}{\alpha-1}}q^{\,*\,\frac{1}{\alpha-1}}_{x_0}(r)}\,.$$
Let $\varepsilon\in (r_1, r_2)$ and let $B(x_0, \varepsilon).$ We
put $C_0=\partial f(B(x_0, r_2)),$ $C_1=f(\overline{B(x_0, r_1)}),$
$\sigma=\partial f(B(x_0, \varepsilon)).$ Since $f$ is continuous in
$D,$ the set $f(B(x_0, r_2))$ is bounded.

\medskip
Since $f$ is continuous, $\overline{f(B(x_0, r_1))}$ is a compact
subset of $f(B(x_0, \varepsilon)),$ and $\overline{f(B(x_0,
\varepsilon))}$ is a compact subset of $f(B(x_0, r_2)).$ In
particular,
$$\overline{f(B(x_0, r_1))}\cap
\partial f(B(x_0, \varepsilon))=\varnothing\,.$$
Let, as above, $R=G \setminus (C_{0} \cup C_{1}),$ $G:=f(D),$ and
$R^{\,*} = R \cup C_{0}\cup C_{1}.$ Then $R^{\,*}.$ Observe that,
$\sigma$ separates $C_0$ from $C_1$ in $R^{\,*}=G.$ Indeed, the set
$\sigma \cap R$ is closed in $R,$ besides that, if $A:=G\setminus
\overline{f(B(x_0, \varepsilon))}$ and $B= f(B(x_0, \varepsilon)),$
then $A$ and $B$ are open in $G\setminus \sigma,$ $C_0\subset A,$
$C_1\subset B$ and $G\setminus \sigma=A\cup B.$

\medskip
Let $\Sigma$ be a family of all sets, which separate $C_0$ from
$C_1$ in $G.$ Below by $\bigcup\limits_{r_1<r<r_2}
\partial f(B(x_0, r))$ or $\bigcup\limits_{r_1<r<r_2}
f(S(x_0, r))$ we mean the union of all Borel sets into a family, but
not in a theoretical-set sense (see \cite[item~3, p.~464]{Zi$_1$}).
Let $\rho^{n-1}\in \widetilde{{\rm adm}}\bigcup\limits_{r_1<r<r_2}
\partial f(B(x_0, r))$ in the sense of the relation~(\ref{eq13.4.13}). Then $\rho\in {\rm
adm}\bigcup\limits_{r_1<r<r_2}
\partial f(B(x_0, r))$ in the sense of~(\ref{eq8.2.6}).
By the openness of the mapping $f$ we obtain that  $\partial
f(B(x_0, r))\subset f(S(x_0, r)),$  therefore, $\rho\in {\rm
adm}\bigcup\limits_{r_1<r<r_2} f(S(x_0, r))$ and, consequently,
by~(\ref{eq13.4.12})
\begin{multline}\label{eq5E}
\widetilde{M_{\frac{p}{n-1}}}(\Sigma)\geqslant
\widetilde{M_{\frac{p}{n-1}}}\left(\bigcup\limits_{r_1<r<r_2}
\partial f(B(x_0, r))\right)\geqslant\\
\geqslant
\widetilde{M_{\frac{p}{n-1}}}\left(\bigcup\limits_{r_1<r<r_2}
f(S(x_0, r))\right)\geqslant\\
\geqslant M_p\left(\bigcup\limits_{r_1<r<r_2} f(S(x_0, r))\right).
 \end{multline}
However, by~(\ref{eq3}) and ~(\ref{eq4}) we obtain that
\begin{equation}\label{eq6D}
\frac{1}{(M_{\alpha}(\Gamma(C_0, C_1, G)))^{1/(\alpha-1)}}=
\widetilde{M_{\frac{p}{n-1}}}(\Sigma)\,.
\end{equation}
Let $\Gamma_{f(\mathcal{E})}$ be a family of all paths which
correspond to the condenser $\varphi(f(\mathcal{E}))$ in the sense
of Proposition~\ref{pr1}, and let $\Gamma^{\,*}_{f(\mathcal{E})}$ be
a family of all rectifiable paths of $\Gamma_{f(\mathcal{E})}.$ Now,
observe that, the families $\Gamma^{*}_{f(\mathcal{E})}$ and $\Gamma
(C_0, C_1, G)$ have the same families of admissible functions
$\rho.$ Thus,
$$M_{\alpha}(\Gamma_{f(\mathcal{E})})=M_{\alpha}(\Gamma(C_0, C_1, G))\,.$$
By Proposition~\ref{pr1}, we obtain that
$M_{\alpha}(\Gamma_{f(\mathcal{E})})={\rm
cap}_{\alpha}f(\mathcal{E}).$ By~(\ref{eq6D}) we obtain that
\begin{equation}\label{eq7B}
\left(\widetilde{M_{\frac{p}{n-1}}}(\Sigma)\right)^{\alpha-1}=\frac{1}{{\rm
cap}_{\alpha}f(\mathcal{E})}\,.
\end{equation}
Finally, by~(\ref{eq5E}) and~(\ref{eq7B}) we obtain that
$$
{\rm cap}_{\alpha}f(\mathcal{E}) \leqslant \frac{1}{M_{\alpha}
\left(\bigcup\limits_{r_1<r<r_2} f(S(x_0, r))\right)^{\alpha-1}}\,.
$$
By Lemma~\ref{lem4}, we obtain that
$$
{\rm cap}_{\alpha}f(\mathcal{\mathcal{E}}) \leqslant
\frac{1}{\left(\int\limits_{r_1}^{r_2} \frac{dr}{\Vert
\,Q\Vert_{s}(r)}\right)^{s}}=\frac{1}{I^{*\,\alpha-1}}\,,
$$
as required.
\end{proof}

\section{Proof of main results}

The following result is proved in~\cite[Theorem~5]{Sev$_2$}.

\begin{proposition}\label{pr2}
{\, Let $x_0\in \partial D,$ let $f:D\rightarrow {\Bbb R}^n$ be an
open, discrete and closed bounded lower $Q$-mapping with a respect
to $p$-modulus in $D\subset{\Bbb R}^n,$ $Q\in
L_{loc}^{\frac{n-1}{p-n+1}}({\Bbb R}^n),$ $n-1<p,$ and
$\alpha:=\frac{p}{p-n+1}.$ Then for any
$\varepsilon_0<d_0:=\sup\limits_{x\in D}|x-x_0|$ and any compactum
$C_2\subset D\setminus B(x_0, \varepsilon_0)$ there is
$\varepsilon_1,$ $0<\varepsilon_1<\varepsilon_0,$ such that the
inequality
\begin{equation}\label{eq3B}
M_{\alpha}(f(\Gamma(C_1, C_2, D)))\leqslant \int\limits_{A(x_0,
\varepsilon, \varepsilon_1)}Q^{\frac{n-1}{p-n+1}}(x)
\eta^{\alpha}(|x-x_0|)\,dm(x)\,,
\end{equation}
holds for any $\varepsilon\in (0, \varepsilon_1)$ and any compactum
$C_1\subset \overline{B(x_0, \varepsilon)}\cap D,$ where $A(x_0,
\varepsilon, \varepsilon_1)=\{x\in {\Bbb R}^n:
\varepsilon<|x-x_0|<\varepsilon_1\}$ and $\eta: (\varepsilon,
\varepsilon_1)\rightarrow [0,\infty]$ is any nonnegative Lebesgue
measurable function such that
\begin{equation}\label{eq6B}
\int\limits_{\varepsilon}^{\varepsilon_1}\eta(r)\,dr=1\,.
\end{equation}
 }
\end{proposition}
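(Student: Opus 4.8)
This is the boundary analogue of Lemma~\ref{l4.4}, and the plan is to re-run the same duality machinery with $x_0\in\partial D$, adding the one ingredient that the boundary position makes nontrivial. Since $f$ is already assumed to be a lower $Q$-mapping, I may invoke Lemma~\ref{lem4} directly and need not reprove the relation~\eqref{eq1A}. First I would pass from paths to a capacity. Writing $G:=f(D)$, every image path $f\circ\gamma$ with $\gamma\in\Gamma(C_1,C_2,D)$ joins $f(C_1)$ to $f(C_2)$ inside $G$, so $f(\Gamma(C_1,C_2,D))$ is a subfamily of $\Gamma(f(C_1),f(C_2),G)$; monotonicity of the modulus together with the Hesse--Shlyk identity~\eqref{eq4} then gives
$$M_{\alpha}(f(\Gamma(C_1,C_2,D)))\leqslant M_{\alpha}(\Gamma(f(C_1),f(C_2),G))=C_{\alpha}[G,f(C_1),f(C_2)]\,.$$
Ziemer's duality~\eqref{eq3}, exactly as in the chain~\eqref{eq5E}--\eqref{eq7B} of Lemma~\ref{l4.4}, rewrites the right-hand side through the family $\Sigma$ of all sets separating $f(C_1)$ from $f(C_2)$ in $G$, yielding
$$M_{\alpha}(f(\Gamma(C_1,C_2,D)))\leqslant\frac{1}{\left(\widetilde{M_{\frac{p}{n-1}}}(\Sigma)\right)^{\alpha-1}}\,.$$

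The decisive, genuinely boundary, step is to exhibit members of $\Sigma$ among the images of the half-spheres. Here I would use that $f$ is open, discrete and closed, hence proper: for such maps the cluster set $C(x_0,f)$ lies in $\partial G$, so the sets $f(\overline{B(x_0,r)}\cap D)$ cluster to $\partial G$ as $r\downarrow 0$. Since $C_2\subset D\setminus B(x_0,\varepsilon_0)$ is a fixed compactum with $f(C_2)$ a compact subset of $G$, I would fix $\varepsilon_1\in(0,\varepsilon_0)$ so small that $f(\overline{B(x_0,\varepsilon_1)}\cap D)$ is disjoint from $f(C_2)$ and lies in the component of $G\setminus f(S(x_0,r)\cap D)$ not containing $f(C_2)$, for every $r\in(\varepsilon,\varepsilon_1)$. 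With this choice each half-sphere image $f(S(x_0,r)\cap D)$ separates $f(C_1)$ from $f(C_2)$, so, arguing as for~\eqref{eq5E},
$$\widetilde{M_{\frac{p}{n-1}}}(\Sigma)\geqslant M_p\!\left(\bigcup\limits_{\varepsilon<r<\varepsilon_1} f(S(x_0,r)\cap D)\right)=M_p(f(\Sigma_{\varepsilon}))\,,$$
where $\Sigma_{\varepsilon}$ is the sphere family of~\eqref{eq1A}.

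To finish, the lower $Q$-mapping hypothesis enters through Lemma~\ref{lem4}: it gives $M_p(f(\Sigma_{\varepsilon}))\geqslant\int_{\varepsilon}^{\varepsilon_1}\frac{dr}{\|\,Q\|_{s}(r)}$ with $s=\frac{n-1}{p-n+1}=\alpha-1$. Combining the three displays yields
$$M_{\alpha}(f(\Gamma(C_1,C_2,D)))\leqslant\frac{1}{\left(\int_{\varepsilon}^{\varepsilon_1}\frac{dr}{\|\,Q\|_{s}(r)}\right)^{\alpha-1}}\,,$$
and the standard reformulation of such an $I^{\ast}$-type bound as a weighted $\eta$-integral (Lemma~2 in~\cite{SalSev}), with $Q^{\ast}=Q^{(n-1)/(p-n+1)}$ and $\eta$ normalized by~\eqref{eq6B}, delivers the asserted inequality~\eqref{eq3B}.

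The duality bookkeeping in the first and last steps is routine and copies Lemma~\ref{l4.4} essentially verbatim. The real obstacle is the separation in the middle step: when $x_0\in\partial D$ the regions $B(x_0,r)\cap D$ are not balls, and one must guarantee, uniformly in $\varepsilon<\varepsilon_1$, that the image half-spheres genuinely separate the two image continua. This is precisely where closedness (properness) of $f$ is indispensable, since it forces $C(x_0,f)\subset\partial G$ and prevents image paths issuing from $C_1$ from reaching $f(C_2)$ without meeting the chosen sphere images; without closedness the separation, and hence the whole estimate, fails.
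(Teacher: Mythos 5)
You should first be aware that the paper contains no proof of Proposition~\ref{pr2} to compare against: it is imported as an external result with the single line ``The following result is proved in~\cite[Theorem~5]{Sev$_2$}.'' Your proposal therefore reconstructs an argument the paper omits, and it does so with exactly the toolkit the paper uses for the interior case (Lemma~\ref{l4.4}): modulus monotonicity plus the Hesse--Shlyk identity~(\ref{eq4}) to pass to a capacity, Ziemer's duality~(\ref{eq3}) as in the chain~(\ref{eq5E})--(\ref{eq7B}), Lemma~\ref{lem4} to cash in the lower $Q$-mapping hypothesis, and Lemma~2 of~\cite{SalSev} to convert the resulting $I^{*}$-type bound into the $\eta$-form~(\ref{eq3B}). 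That skeleton is sound, and you correctly isolate the genuinely boundary-specific ingredient: the choice of $\varepsilon_1$, which uses boundedness of $f$ (so the nested sets $\overline{f(B(x_0,r)\cap D)}$ are compact) and closedness (so the cluster set at $x_0$ lies in $\partial f(D)$, whence these compacta eventually miss the compactum $f(C_2)$); this choice is also what guarantees $f(C_1)\cap f(C_2)=\varnothing$, without which the separating family $\Sigma$ is not even defined.

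Two local repairs are needed before this becomes a complete proof. First, your separating sets are not right as literally stated: since $f$ need not be injective, $f(C_1)$ may meet $f(S(x_0,r)\cap D)$, so the half-sphere image itself need not separate. What separates is the boundary of $f(B(x_0,r)\cap D)$ taken relative to $G:=f(D)$ (disjoint from $f(C_1)$ because $f(B(x_0,r)\cap D)$ is open, and from $f(C_2)$ by the choice of $\varepsilon_1$), and closedness of $f$ enters a second time to give the inclusion of this relative boundary in $f(S(x_0,r)\cap D)$ --- note that $S(x_0,r)\cap D$ is only relatively closed in $D$, so here this inclusion genuinely needs closedness, whereas in the interior case of~(\ref{eq5E}) it follows from openness and compactness of $\overline{B(x_0,r)}$ alone. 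Your appeal to the chain~(\ref{eq5E}) implicitly supplies this two-step passage, but your requirement that $f(\overline{B(x_0,\varepsilon_1)}\cap D)$ ``lie in the component of $G\setminus f(S(x_0,r)\cap D)$ not containing $f(C_2)$'' is impossible as written (that set contains $f(S(x_0,r)\cap D)$) and should be replaced by the disjointness condition alone. Second, ``open, discrete and closed, hence proper'' and the inclusion $C(x_0,f)\subset\partial G$ deserve a word of justification (one first checks $N(y,f,D)<\infty$ for such maps, otherwise one constructs a relatively closed set with non-closed image); and applying Lemma~\ref{lem4} at $x_0\in\partial D$ requires the lower $Q$-property at that boundary point, which is the intended reading of the hypothesis and is what Theorem~\ref{thOS4.2} supplies at every point of $\overline{D}$ in the paper's applications.
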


\begin{remark}
Note that, if~(\ref{eq2*!A}) holds for any function $\eta$ with a
condition (\ref{eq6B}), then the same relationship holds for any
function $\eta$ with the condition~(\ref{eq8BC}). Indeed, let $\eta$
be a nonnegative Lebesgue function that satisfies the condition
(\ref{eq8BC}). If $J:=\int\limits_{r_1}^{r_2}\eta(t)\,dt<\infty,$
then we put $\eta_0:=\eta/J.$ Obviously, the function $\eta_0$
satisfies condition~(\ref{eq6B}). Then the relation~(\ref{eq3B})
gives that
$$M_{\alpha}(f(\Gamma(C_1, C_2, D)))\leqslant
$$
\begin{equation}\label{eq1X}
\frac{1}{J^{\alpha}}\int\limits_A Q(x)\cdot
\eta^{\alpha}(|x-x_0|)\,dm(x)\leqslant \int\limits_A Q(x)\cdot
\eta^{\alpha}(|x-x_0|)\,dm(x)
\end{equation}
because $J\geqslant 1.$ Let now $J=\infty.$ Then, by
\cite[Theorem~I.7.4]{Sa}, a function $\eta$ is a limit of a
nondecreasing nonnegative sequence of simple functions $\eta_m,$
$m=1,2,\ldots .$ Set
$J_m:=\int\limits_{r_1}^{r_2}\eta_m(t)\,dt<\infty$ and
$w_m(t):=\eta_m(t)/J_m.$ Then, similarly to~(\ref{eq1X}) we obtain
that
$$M_{\alpha}(f(\Gamma(C_1, C_2, D)))\leqslant
$$
\begin{equation}\label{eq11B}
\frac{1}{J_m^{\alpha}}\int\limits_A Q(x)\cdot
\eta_m^{\alpha}(|x-x_0|)\,dm(x)\leqslant \int\limits_A Q(x)\cdot
\eta_m^{\alpha}(|x-x_0|)\,dm(x)\,,
\end{equation}
because $J_m\rightarrow J=\infty$ as $m\rightarrow\infty$
(see~\cite[Lemma~I.11.6]{Sa}). Thus, $J_m\geqslant 1$ for
sufficiently large $m\in {\Bbb N}.$ Observe that, a functional
sequence $\varphi_m(x)=Q_*(x)\cdot \eta_m^{\alpha}(|x-x_0|),$
$m=1,2\ldots ,$ is nonnegative, monotone increasing and converges to
a function $\varphi(x):=Q_*(x)\cdot \eta^{\alpha}(|x-x_0|)$ almost
everywhere. By the Lebesgue theorem on the monotone convergence
(see~\cite[Theorem~I.12.6]{Sa}), it is possible to go to the limit
on the right side of the inequality~(\ref{eq11B}), which gives us
the desired inequality~(\ref{eq2*!A}).
\end{remark}

\medskip
The following result is proved in~\cite[Theorem~6]{Sev$_5$}.

\begin{proposition}\label{pr3}
{\, Let $x_0\in \partial D,$ and let $f:D\rightarrow {\Bbb R}^n$ be
a bounded lower $Q$-homeomorphism with respect to $p$-modulus in a
domain $D\subset{\Bbb R}^n,$ $Q\in L_{\rm
loc}^{\frac{n-1}{p-n+1}}({\Bbb R}^n),$ $p>n-1$ and
$\alpha:=\frac{p}{p-n+1}.$ Then $f$ is a ring
$Q^{\frac{n-1}{p-n+1}}$-homeomorphism  with respect to
$\alpha$-modulus at this point, where $\alpha:=\frac{p}{p-n+1}.$
 }
\end{proposition}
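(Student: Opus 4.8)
The plan is to deduce Proposition~\ref{pr3} directly from Proposition~\ref{pr2} by specializing to the homeomorphic case and observing that a homeomorphism is, trivially, an open, discrete, and closed mapping. First I would note that the target statement is the homeomorphic analogue of Proposition~\ref{pr2}: where Proposition~\ref{pr2} produces the inequality~(\ref{eq3B}) valid only for \emph{sufficiently small} annuli $A(x_0,\varepsilon,\varepsilon_1)$, the notion of a ring $Q^{(n-1)/(p-n+1)}$-mapping with respect to $\alpha$-modulus (as defined through~(\ref{eq2*!A})) demands the estimate for \emph{all} admissible radii $0<r_1<r_2<d_0$. So the essential content is to upgrade the local, near-boundary inequality of Proposition~\ref{pr2} into the uniform ring estimate, using that a homeomorphism preserves the separation structure globally.

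The key steps I would carry out are as follows. First, observe that any homeomorphism $f$ is automatically open, discrete, and closed onto its image, and that a lower $Q$-homeomorphism is in particular a lower $Q$-mapping with respect to $p$-modulus, so the hypotheses of Proposition~\ref{pr2} are met with $x_0\in\partial D$. Second, for a fixed interior point $z_0=x_0$ the ring condition~(\ref{eq2*!A}) is exactly the conclusion of Lemma~\ref{l4.4} combined with Theorem~\ref{thOS4.2} (which identifies the dilatation bound $Q=N(f,D)K_{I,\alpha}^{p-1}$), whereas for a boundary point $x_0\in\partial D$ the correct tool is Proposition~\ref{pr2}. Third, I would invoke the Remark following Proposition~\ref{pr2}: it shows that once the inequality holds for every normalized $\eta$ satisfying the equality~(\ref{eq6B}), it automatically holds for every $\eta$ satisfying only the weaker constraint~(\ref{eq8BC}), via the rescaling $\eta_0=\eta/J$ and the monotone-convergence argument for $J=\infty$. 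This removes any gap between the two admissibility conventions.

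The main obstacle will be passing from the locally-valid inequality near $\partial D$ to the ring definition on \emph{arbitrary} annuli. The resolution is that for a homeomorphism the continuum $C_1\subset\overline{B(x_0,r_1)}$ and $C_2\subset D\setminus B(x_0,r_2)$ are carried to disjoint continua whose separating-surface structure is faithfully reproduced, so the Ziemer–Hesse–Shlyk duality~(\ref{eq3})–(\ref{eq4}) together with the lower modulus estimate of Lemma~\ref{lem4} applies on every annulus, not merely the small ones. Concretely, I would run the argument of Lemma~\ref{l4.4} verbatim with $x_0$ on the boundary, using that injectivity of $f$ makes $N(f,D)=1$, so that the dilatation weight $Q^{(n-1)/(p-n+1)}$ collapses to exactly $K_{I,\alpha}(z,f)$ with no multiplicity factor.

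Having identified $\alpha=\frac{p}{p-n+1}$ and $s=\frac{n-1}{p-n+1}=\alpha-1$, the exponent $Q^{(n-1)/(p-n+1)}$ appearing in the ring inequality matches the $Q^{\,*}$ of Lemma~\ref{l4.4}, and the whole chain is self-consistent. Thus the proof is essentially a combination of Proposition~\ref{pr2}, the Remark, and the injectivity simplification, and I would expect the write-up to be short, with the only subtlety being the careful bookkeeping of admissible-function conventions that the Remark already settles.
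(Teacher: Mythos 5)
Your proposed reduction does not close the actual gap, and the fallback you offer for it does not apply. Note first that the paper itself contains no internal proof of Proposition~\ref{pr3}: it is quoted from \cite[Theorem~6]{Sev$_5$}, so any derivation must stand on its own. Yours rests on Proposition~\ref{pr2}, but the conclusion of Proposition~\ref{pr2} is strictly weaker than the ring property~(\ref{eq2*!A}) that Proposition~\ref{pr3} asserts. Proposition~\ref{pr2} produces, for each $\varepsilon_0$ and each compactum $C_2$, \emph{some} $\varepsilon_1=\varepsilon_1(C_2)\in(0,\varepsilon_0)$, and gives the estimate~(\ref{eq3B}) only over annuli $A(x_0,\varepsilon,\varepsilon_1)$ with $\varepsilon<\varepsilon_1$ and only for continua $C_1\subset\overline{B(x_0,\varepsilon)}\cap D$, $\varepsilon<\varepsilon_1$. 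The ring property requires the inequality for \emph{every} annulus $A(x_0,r_1,r_2)$ and every $C_1\subset\overline{B(x_0,r_1)}\cap D$. If the $\varepsilon_1$ supplied by Proposition~\ref{pr2} satisfies $\varepsilon_1\leqslant r_1$, the proposition says nothing whatever about such $C_1$; and even when $\varepsilon_1>r_1$, an admissible $\eta$ for $(r_1,r_2)$ may be supported in $(\varepsilon_1,r_2)$, where Proposition~\ref{pr2} gives no control. The point is quantitative: since any $\eta$ admissible on a subinterval extends by zero to one admissible on the larger interval, the infimum of the right-hand side of~(\ref{eq2*!A}) decreases as the annulus grows, so the all-annuli statement is genuinely stronger than the small-annulus one and cannot be recovered from it. The Remark you invoke only reconciles the normalization~(\ref{eq6B}) with~(\ref{eq8BC}); it does not touch this radius problem.

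Your fallback, to ``run the argument of Lemma~\ref{l4.4} verbatim with $x_0$ on the boundary,'' also fails, because that proof is intrinsically an interior-point argument: it needs $\overline{B(x_0,r_2)}\subset D$ so that the condenser plates $f(\overline{B(x_0,r_1)})$ and $f(B(x_0,r_2))$ are defined, it uses that $\overline{f(B(x_0,r_1))}$ is compactly contained in $f(B(x_0,\varepsilon))$, and it uses the inclusion $\partial f(B(x_0,r))\subset f(S(x_0,r))$ coming from openness of $f$ on the full ball. At $x_0\in\partial D$ the map is not defined on $B(x_0,r)$, and the boundary of $f(B(x_0,r)\cap D)$ contains cluster points lying on $\partial f(D)$ which are \emph{not} contained in $f(S(x_0,r)\cap D)$, so the separation step collapses. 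What actually makes the homeomorphism case work at the boundary --- and what the cited proof exploits --- is that injectivity guarantees, for \emph{every} $r\in(r_1,r_2)$, that $f(S(x_0,r)\cap D)$ separates $f(C_1)$ from $f(C_2)$ inside $f(D)$; one then applies Ziemer's equality~(\ref{eq3}), the Hesse--Shlyk equality~(\ref{eq4}) and Lemma~\ref{lem4} on every annulus. You allude to exactly this (``the separating-surface structure is faithfully reproduced'') but never carry it out, deferring instead to a lemma whose hypotheses exclude boundary points. Finally, your appeal to Theorem~\ref{thOS4.2}, $N(f,D)$ and $K_{I,\alpha}$ is out of place: Proposition~\ref{pr3} concerns an abstract lower $Q$-homeomorphism, with no finite-distortion hypothesis, so no identification of $Q$ with a dilatation is needed or available there.
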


\medskip
{\it Proof of Theorem~\ref{th1}.} Fix $z_0\in \overline{D}.$ Two
situations are possible: when $z_0\in D,$ and when $z_0\in \partial
D.$ Let $z_0\in D.$ Set $p:=\frac{\alpha}{\alpha-1}.$ Due to
Theorem~\ref{thOS4.2}, $f$ satisfies the relation~(\ref{eq1A}) with
$Q(z):=K_{I, \alpha}^{p-1}(z, f).$ Now, $f$ satisfies the
relation~(\ref{eq4A}) with $Q^{\,*}(z):=K_{I, \alpha}(z, f).$
Observe that the relation
\begin{equation}\label{eq5}
M_{\alpha}(f(\Gamma(C_1, C_2, D)))\leqslant {\rm
cap}_{\alpha}(f(B(z_0, r_2)), f(\overline{B(z_0, r_1)}))
\end{equation}
holds for all $0<r_1<r_2<d_0:={\rm dist}\, (z_0,
\partial D)$ and for any
continua $C_1\subset \overline{B(z_0, r_1)},$ $C_2\subset D\setminus
B(z_0, r_2).$ Indeed, $f(\Gamma(C_1, C_2, D))>\Gamma_{f(E)},$ where
$E:=(f(B(z_0, r_2)), f(\overline{B(z_0, r_1)})),$ and
$\Gamma_{f(E)}$ is a family from Proposition~\ref{pr1} for the
condenser $f(E).$ The relation~(\ref{eq5}) finishes the proof for
the case $z_0\in D.$ Let now $z_0\in \partial{D}.$ Again, by
Theorem~\ref{thOS4.2}, $f$ satisfies the relation~(\ref{eq1A}) with
$Q(z):=K_{I, \alpha}^{p-1}(z, f).$ Now $f$ satisfies the
relation~(\ref{eq2*!A}) with $Q^{\,*}(z):=K_{I, \alpha}(z, f)$ by
Proposition~\ref{pr3}.~$\Box$

\medskip
{\it Proof of Theorem~\ref{th1A}} directly follows by
Theorem~\ref{thOS4.2} and Lemma~\ref{l4.4}.~$\Box$

\medskip
{\it Proof of Theorem~\ref{th2}} directly follows by
Theorem~\ref{thOS4.2} and Proposition~\ref{pr3}.~$\Box$

\section{Example}

Let $Q(z)=\log\frac{e}{|z|},$ $z\in {\Bbb D},$ let
$q_0(r):=\log\frac{e}{r},$ and let $1<\alpha< 2.$ Observe that
$\int\limits_{\varepsilon}^{\varepsilon_0}\frac{dt}{t^{\frac{1}{\alpha-1}}q_{0}^{\,\frac{1}{\alpha-1}}(t)}<\infty$
for any $\varepsilon_0\in (0, 1)$ and any $\varepsilon\in (0,
\varepsilon_0),$ in addition,
\begin{equation}\label{eq2B}
\int\limits_0^{\varepsilon_0}\frac{dt}{t^{\frac{1}{\alpha-1}}q_{0}^{\,\frac{1}{\alpha-1}}(t)}=\infty\,,
\end{equation}
where $q_0(t)$ is defined by~(\ref{eq16}). Set
$$
f(z)=\frac{z}{|z|}\rho(|z|)\,,\quad z\in {\Bbb D}\setminus \{0\},
\quad f(0):=0\,,
 $$
where
$$
\rho(|z|)=\left(1+\frac{2-\alpha}{\alpha-1}\int\limits_{|z|}^1
\frac{dt}{t^{\frac{1}{\alpha-1}}q_0^{\frac{1}{\alpha-1}}(t)}\right)^{\frac{\alpha-1}{\alpha-2}}\,.
 $$
Observe that, $f\in ACL$ and, besides that, $f$ is differentiable in
${\Bbb D}$ almost everywhere.
By the technique used in \cite[Proposition~6.3]{MRSY}
$$|J(z, f)|=\delta_{\tau}(z)\cdot \delta_r(z), \qquad \Vert f^{\,\prime}(z)\Vert= \max\{\delta_{\tau}(z), \delta_r(z)
\}\,,\quad l(f^{\,\prime}(z))= \min\{\delta_{\tau}(z), \delta_r(z)
\}\,,$$
where $l(f^{\,\prime}(z))=|f_z|-|f_{\overline{z}}|,$ $\Vert
f^{\,\prime}(z)\Vert=|f_z|+|f_{\overline{z}}|,$
$$\delta_{\tau}(z)=\frac{|f(z)|}{|z|}\,,\quad \delta_r(z)=\frac{|\partial |f(z)||}{\partial |z|}\,.$$
Thus,
 \begin{gather*}
\Vert f^{\,\prime}(z)\Vert
=\left(1+\frac{2-\alpha}{\alpha-1}\int\limits_{|z|}^1
\frac{dt}{t^{\frac{1}{\alpha-1}}q_0^{\frac{1}{\alpha-1}}(t)}\right)^{\frac{\alpha-1}{\alpha-2}}\frac{1}
{|z|}\,,\\
l(f^{\,\prime}(z))=\left(1+\frac{2-\alpha}{\alpha-1}\int\limits_{|z|}^1
\frac{dt}{t^{\frac{1}{\alpha-1}}q_0^{\frac{1}{\alpha-1}}(t)}\right)^{\frac{1}{\alpha-2}}\frac{1}
{|z|^{\frac{1}{\alpha-1}}q_0^{\frac{1}{\alpha-1}}(|z|)}
 \end{gather*}
and
 $$
|J(z, f)|=\left(1+\frac{2-\alpha}{\alpha-1}\int\limits_{|z|}^1
\frac{dt}{t^{\frac{1}{\alpha-1}}q_0^{\frac{1}{\alpha-1}}(t)}\right)^{\frac{\alpha}{\alpha-2}}\frac{1}
{|z|^{1+\frac{1}{\alpha-1}}q_0^{\frac{1}{\alpha-1}}(|z|)}\,.
 $$
Observe that, $f\in W_{\rm loc}^{1, \alpha}$ for $\alpha>1.$ Indeed,
$\Vert f^{\,\prime}(z)\Vert$ is bounded outside of some neighborhood
of the origin, in addition, for sufficiently small $r>0$ and some
$C>0,$ we obtain that $\Vert f^{\,\prime}(z)\Vert\leqslant C/|z|.$
Besides that, by the Fubini theorem $\int\limits_{B(0, r)}\Vert
f^{\,\prime}(z)\Vert^{\,\alpha}\,dm(z)\leqslant
C^{\,\alpha}2\pi\cdot\int\limits_0^r r^{1-\alpha}dr<\infty.$ Let $K$
be any compact set in ${\Bbb D}.$ Then there is $R>0$ such that
$K\subset B(0, R).$ Due to the above, we obtain that
$$\int\limits_K\Vert f^{\,\prime}(z)\Vert^{\,\alpha}\,dm(z)\leqslant
\int\limits_{B(0, r)}\Vert
f^{\,\prime}(z)\Vert^{\,\alpha}\,dm(z)+$$$$+\int\limits_{B(0,
R)\setminus B(0, r)}\Vert
f^{\,\prime}(z)\Vert^{\,\alpha}\,dm(z)<\infty\,,$$
therefore, $f\in W_{\rm loc}^{1, \alpha}.$
Observe that $K_{I, \alpha}(z, f)=q_{0}(|z|).$ By Theorem~\ref{th1},
$f$ satisfies the inequality~(\ref{eq2*!A}) with $Q(z)=q_{0}(|z|)$
at any point $z_0\in \overline{\Bbb D}.$
 {\footnotesize

\medskip
\medskip
{\bf \noindent Ruslan Salimov} \\
Institute of Mathematics of NAS of Ukraine, \\
3 Tereschenkivska Str., 01 024 Kiev-4, UKRAINE\\
ruslan.salimov1@gmail.com

\medskip
{\bf \noindent Evgeny Sevost'yanov} \\
{\bf 1.} Zhytomyr Ivan Franko State University,  \\
40 Bol'shaya Berdichevskaya Str., 10 008  Zhytomyr, UKRAINE \\
{\bf 2.} Institute of Applied Mathematics and Mechanics\\
of NAS of Ukraine, \\
1 Dobrovol'skogo Str., 84 100 Slavyansk,  UKRAINE\\
esevostyanov2009@gmail.com

\medskip
{\bf \noindent Valery Targonskii} \\
Zhytomyr Ivan Franko State University,  \\
40 Bol'shaya Berdichevskaya Str., 10 008  Zhytomyr, UKRAINE \\
w.targonsk@gmail.com

}

\end{document}